 \newtheorem{thm}{Theorem}[section]
 \newtheorem{cor}[thm]{Corollary}
 \newtheorem{lem}[thm]{Lemma}
 \newtheorem{prop}[thm]{Proposition}
 \theoremstyle{definition}
 \newtheorem{defn}[thm]{Definition}
 \theoremstyle{remark}
 \numberwithin{equation}{section}
\newcommand{\bprop} {\begin{proposition}}
\newcommand{\eprop} {\end{proposition}}
\newcommand{\btheo} {\begin{theorem}}
\newcommand{\etheo} {\end{theorem}}
\newcommand{\blem} {\begin{lemma}}
\newcommand{\elem} {\end{lemma}}
\newcommand{\bcor} {\begin{corollary}}
\newcommand{\ecor} {\end{corollary}}
\newcommand{\Be}{\begin{equation}}
\newcommand{\Ee}{\end{equation}}
\newcommand{\Bea}{\begin{eqnarray}}
\newcommand{\Eea}{\end{eqnarray}}
\newcommand{\Bes}{\begin{equation*}}
\newcommand{\Ees}{\end{equation*}}
\newcommand{\Beas}{\begin{eqnarray*}}
\newcommand{\Eeas}{\end{eqnarray*}}
\newcommand{\Ba}{\begin{array}}
\newcommand{\Ea}{\end{array}}
\def\C{\mathbb{C}}
\begin{document}

%
%
%
%
%
%
%
%
%

\title[HANKEL OPERATORS ON HARDY-ORLICZ SPACES]{HANKEL OPERATORS ON HOLO\-MORPHIC HARDY-ORLICZ SPACES}

\author{Beno\^it F. Sehba}
\address{Centre de Math\'ematiques et Informatique (CMI), Universit\'e de Provence, Technop\^ole Ch\^ateau-Gombert
39, rue F. Joliot Curie,13453 Marseille Cedex 13 France}
\email{bsehba@cmi.univ-mrs.fr}

\thanks{The first author was partially supported by the ANR project ANR-09-BLAN-0058-01.
The second author was supported by the Centre of Recerca Matem\`atica, Barcelona (Spain).}
\author{Edgar Tchoundja}
\address{D\'epartement de Math\'ematiques\\ Facult\'e des Sciences\\ Universit\'e de Yaound\'e I\\
B.P. 812\\ Yaound\'e Cameroun}
\email{tchoundjaedgar@yahoo.fr}
\subjclass{Primary 47B35, Secondary 32A35, 32A37}

\keywords{Hankel operators, Hardy-Orlicz spaces,  weighted BMOA spaces, weak factorization}

\date{May 03, 2012}

\begin{abstract}
We characterize the symbols of Hankel operators that extend into bounded operators from the Hardy-Orlicz $\mathcal H^{\Phi_1}(\mathbb
 B^n)$ into $\mathcal H^{\Phi_2}(\mathbb  B^n)$ in the unit ball of $\mathbb C^n$, in the case where the growth functions $\Phi_1$ and $\Phi_2$ are  either concave or convex. The case where the growth functions are both concave has been studied by Bonami and Sehba.
 We also obtain  several weak factorization theorems for functions in  $\mathcal H^{\Phi}(\mathbb B^n)$,
 with concave growth function, in terms of products of Hardy-Orlicz functions with convex growth functions.
\end{abstract}

\maketitle

\section{Introduction and statement of results}

Let $\mathbb B^n=\{z\in \C^n:|z|<1\}$ be the unit ball of
$\C^n$($n> 1$). We denote by $d\nu$ the Lebesgue measure on
$\mathbb B^n$ and $d\sigma$  the normalized measure on
$\mathbb S^n=\partial{\mathbb B^n}$ the boundary of
$\mathbb B^n$. By $\mathcal H(\mathbb B^n)$, we denote the
space of holomorphic functions on $\mathbb B^n.$

For $z=(z_1,\cdots,z_n)$ and $w=(w_1,\cdots,w_n)$ in
$\C^n$, we let
$$\langle z,w\rangle =z_1\overline {w_1} +
\cdots + z_n\overline {w_n}$$
 so that $|z|^2=\langle
z,z\rangle =|z_1|^2 +\cdots +|z_n|^2$.

  We say a function $\Phi$ is a growth function, if $\Phi$ is a continuous and non-decreasing function from $[0,\infty)$ onto itself.
  We say that $\Phi$ is of lower type if we can find $p>0$ and $C>0$ such that, for $s>0$ and $0<t\le 1$,
\begin{equation}\label{lowertype}
 \Phi(st)\le Ct^p\Phi(s).\end{equation}
 We say that $\Phi$ is of upper type  if we can find $q > 0$ and $C>0$ such that, for $s>0$ and $t\ge 1$,
\begin{equation}\label{uppertype}
 \Phi(st)\le Ct^q\Phi(s).\end{equation}
We say that $\Phi$ is of lower type $p$ (resp. upper type $q$) when (\ref{lowertype}) (resp. (\ref{uppertype})) is satisfied.
 Also, we say that $\Phi$ satisfies the $\Delta_2 -$ condition if there exists a constant $K>1$ such that, for any $t\ge 0$,
\begin{equation}\label{delta2condition}
 \Phi(2t)\le K\Phi(t).\end{equation}
Observe the equivalence between the properties (\ref{uppertype}) and (\ref{delta2condition}).

For $\Phi$ a growth function, we denote by $\mathcal H^\Phi(\mathbb B^n)$ the Hardy-Orlicz space consisting of holomorphic function $f$ in the unit ball $\mathbb B^n$ such that if the functions $f_r$ are defined by $f_r(z)=f(rz)$ then
 $$||f||^{lux}_{\mathcal H^\Phi}:=\sup_{r<1}||f_r||_{L^\Phi}^{lux}<\infty $$
 where $$||f||_{L^\Phi}^{lux}:=\inf \left\{\lambda>0:\int_{\mathbb S^n}\Phi\left(\frac{|f(\xi)|}{\lambda}\right)d\sigma(\xi)\leq 1\right\}$$
 is the Luxemboug (quasi)-norm of $f$ in the Orlicz space $L^\Phi(\mathbb S^n)$.
We will also often consider the following (quasi)-norm on $\mathcal H^\Phi(\mathbb B^n)$, namely
$$||f||_{\mathcal H^\Phi}:=\sup_{r<1}\int_{\mathbb S^n}\Phi(|f(r\xi)|)d\sigma(\xi)$$
which is finite for $f\in \mathcal H^\Phi(\mathbb B^n)$.
 For $0<p<\infty$, when $\Phi(t)=t^p$, the above space corresponds to the usual Hardy space
  $\mathcal H^p(\mathbb B^n)$, that  is
 the space of all $f\in \mathcal H(\mathbb B^n)$ such that $$||f||_{p}^{p} := \sup_{0<r<1}\int_{\mathbb S^n}|f(r\xi)|^{p}d\sigma(\xi) <
 \infty.$$

Two growth functions $\Phi_1$ and $\Phi_2$ are said equivalent if there exists some constant $c$ such that
$$c\Phi_1(ct)\leq \Phi_2(t)\leq c^{-1}\Phi_1(c^{-1}t)\,\,\,\textrm{for all}\,\,\,t>0.$$
Such equivalent growth functions define the same Orlicz space.
We denote by $\mathcal H^\infty(\mathbb B^n)$, the space of bounded holomorphic functions  in $\mathbb B^n$.

The following is proved in \cite{VT}:
\begin{prop}\label{volbergtolokonnikov}
For $\Phi_1$ and $\Phi_2$ two growth functions, the bilinear map $(f,g)\mapsto fg$ sends $L^{\Phi_1}\times L^{\Phi_2}$ onto $L^{\Phi}$, with the inverse mappings of $\Phi_1,\Phi_2$ and $\Phi$ related by
\begin{equation}\label{volbergproduct}
\Phi^{-1}=\Phi_1^{-1}\cdot \Phi_2^{-1}.
\end{equation}
Moreover, there exists some constant $c$ such that
$$||fg||^{lux}_{L^{\Phi}}\le c ||f||^{lux}_{L^{\Phi_1}}||g||^{lux}_{L^{\Phi_2}}.$$
\end{prop}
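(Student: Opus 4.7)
My plan is to split the proposition into two independent claims---the quantitative norm estimate and the surjectivity---and handle each by a different mechanism: a pointwise Young-type inequality for the norm bound, and an explicit measurable factorization for the ``onto'' part.

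For the norm estimate I would first derive the pointwise inequality
$$\Phi(ab) \le \max(\Phi_1(a), \Phi_2(b)) \le \Phi_1(a) + \Phi_2(b), \qquad a, b \ge 0.$$
Setting $s = \Phi_1(a)$ and $t = \Phi_2(b)$ and assuming $s \le t$ without loss of generality, monotonicity of $\Phi_1^{-1}$ gives $a \le \Phi_1^{-1}(t)$, so $ab \le \Phi_1^{-1}(t)\Phi_2^{-1}(t) = \Phi^{-1}(t)$ by~(\ref{volbergproduct}), and applying $\Phi$ yields the bound. Given $A > \|f\|^{lux}_{L^{\Phi_1}}$ and $B > \|g\|^{lux}_{L^{\Phi_2}}$, applying this pointwise with $a = |f(\xi)|/A$, $b = |g(\xi)|/B$ and integrating over $\mathbb S^n$ gives $\int \Phi(|fg|/(AB))\,d\sigma \le 2$. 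A further rescaling of $A$ and $B$ by factors depending only on $\Phi_1, \Phi_2$ then reduces the right-hand side to $1$, producing the desired constant $c$ in the Luxemburg norm inequality.

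For the surjectivity, given $h \in L^\Phi$ with $h \not\equiv 0$, I would set
$$f(\xi) = \frac{h(\xi)}{|h(\xi)|}\,\Phi_1^{-1}(\Phi(|h(\xi)|)), \qquad g(\xi) = \Phi_2^{-1}(\Phi(|h(\xi)|))$$
on $\{h \ne 0\}$ and zero elsewhere. Then~(\ref{volbergproduct}) yields $|fg| = \Phi^{-1}(\Phi(|h|)) = |h|$, while the unimodular factor ensures $fg = h$. The inclusions $f \in L^{\Phi_1}$ and $g \in L^{\Phi_2}$ with norms controlled by $\|h\|^{lux}_{L^\Phi}$ follow from the identity $\Phi_i \circ \Phi_i^{-1}(t) \le t$, which reduces the respective modulars to that of $h$.

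The main obstacle I anticipate is quantifying the rescaling that upgrades the integral estimate $\le 2$ into the strict Luxemburg bound $\le 1$ with a constant independent of $f$ and $g$. Without a convexity or $\Delta_2$ hypothesis this is not automatic---a naive dominated-convergence argument only gives a scaling factor depending on $f, g$---so one must either exploit an implicit type-like property of the growth functions or argue more delicately, as is presumably done in \cite{VT}. The factorization step is, by contrast, essentially routine once~(\ref{volbergproduct}) is available, modulo verifying measurability and handling the zero set of $h$.
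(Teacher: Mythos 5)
The paper offers no proof of this proposition at all---it is quoted from \cite{VT}---so the only meaningful comparison is with the standard argument from that reference, which is exactly the one you reconstruct: the pointwise Young-type inequality $\Phi(ab)\le\max(\Phi_1(a),\Phi_2(b))$ obtained by comparing $s=\Phi_1(a)$ with $t=\Phi_2(b)$ and invoking (\ref{volbergproduct}), followed by the explicit factorization $h=\bigl(\tfrac{h}{|h|}\Phi_1^{-1}(\Phi(|h|))\bigr)\cdot\Phi_2^{-1}(\Phi(|h|))$ for the ``onto'' part (applied to $h/\lambda$ with $\lambda=\|h\|^{lux}_{L^{\Phi}}$ so that the modulars of both factors are $\le 1$). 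Both halves of your argument are correct, modulo the usual generalized-inverse conventions for non-injective growth functions.

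The one step you leave open---upgrading $\int\Phi(|fg|/(AB))\,d\sigma\le 2$ to a Luxemburg bound $\le cAB$---is indeed the only real issue, and you are right that it is not automatic for arbitrary growth functions: one needs a constant $c$ with $\Phi(t/c)\le\tfrac12\Phi(t)$ for all $t$, and a $\Phi$ with increasingly long near-flat stretches admits no such $c$, so the proposition as literally stated requires an implicit type hypothesis. For every instance in which the paper uses the result the gap closes immediately: the growth functions employed all lie in $\mathscr{L}_p$ or $\mathscr{U}^q$, hence have positive lower type, so their inverses have finite upper type; the product $\Phi^{-1}=\Phi_1^{-1}\cdot\Phi_2^{-1}$ is then of finite upper type, whence (by the reversibility noted in Proposition \ref{phiandinverse}) $\Phi$ has some lower type $r>0$, and (\ref{lowertype}) gives $\Phi(t/c)\le Cc^{-r}\Phi(t)\le\tfrac12\Phi(t)$ for $c$ large depending only on $\Phi_1,\Phi_2$. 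Inserting this into your integral estimate yields $\int\Phi(|fg|/(cAB))\,d\sigma\le 1$ and hence the stated norm inequality with a constant depending only on the growth functions, which is all the proposition claims. With that sentence added, your proof is complete.
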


Let us define two classes of growth functions of our interest in this paper.
\begin{defn}
We call $\mathscr{L}_p$ the set of growth functions $\Phi$ of lower type $p$,  $(0<p\le 1)$, such that the function $t\mapsto \frac{\Phi(t)}{t}$ is non-increasing.
\end{defn}
\begin{defn}
We call $\mathscr{U}^q$ the set of growth functions $\Phi$ of upper type $q$,  $(q\ge 1)$, such that the function $t\mapsto \frac{\Phi(t)}{t}$ is non-decreasing.
\end{defn}
Clearly, functions in $\mathscr{L}_p$ and $\mathscr{U}^q$ satisfy the $\Delta_2-$ condition. Note that if $\Phi \in \mathscr{U}^q$, then $\Phi$ is of lower type $1$. For $\Phi\in \mathscr{L}_p$  (resp. $\mathscr{U}^q$), without loss of generality, possibly replacing $\Phi$ by the equivalent growth function $\int_0^t\frac{\Phi(s)}{s}ds$, we can assume that $\Phi$ is concave (resp. convex) and $\Phi$ is a $\mathcal{C}^1$ function with derivative $\Phi^\prime(t) \simeq \frac{\Phi(t)}{t}.$

For any $\xi\in
 \mathbb S^n$ and $\delta > 0$, let $$ B_{\delta}(\xi)=\{w\in \mathbb S^n : |1-\langle w,\xi\rangle|<
 \delta\}.$$

 We call  a weight
$\varrho$, any continuous increasing function from $[0,\infty)$ onto itself, which is of upper type $\alpha$ on $[0,1]$, that is,
\begin{equation}
\varrho(st)\leq s^\alpha\varrho(t)
\end{equation}
for $s>1,$ with $st\leq 1.$ Given a weight $\varrho,$ we define the space $BMO(\varrho)$ as the subspace of $L^2(\mathbb S^n)$ consisting of
those $f\in L^2(\mathbb S^n)$ such that
\Be\label{bmoaweight} \sup_B\inf_{R\in \mathcal P_N(B) }\frac 1{(\varrho(\sigma(B)))^2\sigma(B)}\int_B|f-R|^2d\sigma=C<\infty,\Ee
where, for $B=B_\delta(\xi_0)$, the space $\mathcal P_N(B)$ is the space of polynomials of
 order $\leq N$ in the  $(2n-1)$ last coordinates related to an orthonormal basis whose first element is $\xi_0$ and
 second element $\Im \xi_0$. Here $N$ is taken larger than $2n\alpha-1$. We set $\|f\|_{BMO(\varrho)}:=\|f\|_{2}+C$, where $C$ is given in the definition (\ref{bmoaweight}) of $BMO(\varrho)$.  The space $BMOA (\varrho)$ is then the space of function $f\in \mathcal H^2(\mathbb B^n)$ such that
 $$\sup_{r<1} \|f_r\|_{BMO(\varrho)}<\infty.$$

 Clearly, $BMOA(\varrho)$ coincides with the space of holomorphic functions in $\mathcal H^2(\mathbb B^n)$
 such that their boundary values lie in $BMO(\varrho)$. When $\varrho=1$, $BMOA(\varrho)$ is the usual space of holomorphic functions with bounded mean
 oscillation $BMOA$.

As pointed out in \cite{BG,BS}, from Viviani's results \cite{V}, $BMOA(\rho)$ spaces appear as duals of particular Hardy-Orlicz spaces.
\begin{thm}\label{duality1}
Let $\Phi\in \mathscr{L}_p$. The topological dual space $(\mathcal{H}^\Phi(\mathbb
 B^n))^*$ of $\mathcal{H}^\Phi(\mathbb
 B^n)$ can be identified with the space $BMOA(\rho)$ (with equivalent norms) under the integral pairing
$$\left\langle f,g \right\rangle = \lim_{r\rightarrow 1}\int_{\mathbb{S}^n}f(r\xi)\overline{g(r\xi)}d\sigma(\xi),$$
when $\Phi$ and $\rho$ are related by
\begin{equation}\label{relationrhoandphi}
\rho(t):=\rho_\Phi(t)= \frac{1}{t\Phi^{-1}(1/t)}.
\end{equation}
\end{thm}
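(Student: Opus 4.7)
The plan is to identify the duality via Viviani's atomic decomposition of the holomorphic Hardy-Orlicz space, coupled with the observation that $BMOA(\rho)$ is precisely the space of functionals whose action on a single atom is controlled by the scaling $\rho(\sigma(B))$. Concretely, for $\Phi \in \mathscr L_p$, every $f\in \mathcal H^\Phi(\mathbb B^n)$ admits a boundary decomposition $f = \sum_j \lambda_j a_j$ in which each $a_j$ is a $\Phi$-atom: it is supported in an anisotropic ball $B_j = B_{\delta_j}(\xi_j)$, is orthogonal to $\mathcal P_N(B_j)$ (with $N > 2n\alpha - 1$), and satisfies $\|a_j\|_\infty \le \Phi^{-1}(1/\sigma(B_j))$; the convergence is governed by $\sum_j \Phi(\lambda_j)\sigma(B_j) \lesssim \|f\|_{\mathcal H^\Phi}$.

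I would first prove the inclusion $BMOA(\rho)\subset (\mathcal H^\Phi(\mathbb B^n))^*$. Given $g\in BMOA(\rho)$ and a $\Phi$-atom $a$ supported in $B$, pick $R\in \mathcal P_N(B)$ nearly minimizing the oscillation defining $\|g\|_{BMO(\rho)}$. The cancellation property yields $\int a\overline g\,d\sigma = \int a\,\overline{(g-R)}\,d\sigma$, and Cauchy-Schwarz gives
\begin{equation*}
\Bigl|\int a\,\overline g\,d\sigma\Bigr| \le \|a\|_\infty\,\sigma(B)^{1/2}\Bigl(\int_B|g-R|^2\,d\sigma\Bigr)^{1/2} \le \Phi^{-1}\bigl(1/\sigma(B)\bigr)\,\sigma(B)\,\rho(\sigma(B))\,\|g\|_{BMO(\rho)}.
\end{equation*}
By the definition (\ref{relationrhoandphi}) of $\rho$, the product $\Phi^{-1}(1/\sigma(B))\,\sigma(B)\,\rho(\sigma(B))$ collapses to $1$, so $|\int a\overline g\,d\sigma|\le \|g\|_{BMO(\rho)}$. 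Combining this estimate with the atomic decomposition, and using the $\Delta_2$ condition and the lower-type inequality to pass from the pointwise bound on atoms to a bound involving $\|f\|_{\mathcal H^\Phi}^{lux}$, produces the boundedness of the pairing. That the atomic pairing coincides with $\lim_{r\to 1}\int f_r\overline{g_r}\,d\sigma$ is a standard verification using $g_r \to g$ in $L^2(\mathbb S^n)$ and a controlled convergence for the atomic series.

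For the reverse direction, given $L\in (\mathcal H^\Phi(\mathbb B^n))^*$, I would reconstruct a holomorphic $g$ by setting $g(z) = \overline{L(K(\cdot,z))}$, where $K$ is the Cauchy-Szeg\"o kernel. To show $g\in BMOA(\rho)$ I test $L$ against atoms on a fixed ball $B$: the functional $a\mapsto L(a)$, restricted to $\Phi$-atoms on $B$ modulo $\mathcal P_N(B)$, is represented via $L^2(B)$ duality by a function $g_B\perp \mathcal P_N(B)$, and the bound $|L(a)|\le \|L\|$ on atoms with $\|a\|_\infty\le \Phi^{-1}(1/\sigma(B))$ translates into
\begin{equation*}
\Bigl(\frac{1}{\sigma(B)}\int_B|g_B|^2\,d\sigma\Bigr)^{1/2}\le \|L\|\,\rho(\sigma(B)).
\end{equation*}
A consistency argument across balls identifies $g_B$ with $g - P_B(g)$ for some $P_B(g)\in \mathcal P_N(B)$, giving $\|g\|_{BMOA(\rho)}\lesssim \|L\|$.

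The main obstacle is that $\mathcal H^\Phi(\mathbb B^n)$ is only a quasi-Banach space when $p<1$, so one cannot simply extend $L$ by Hahn-Banach to $L^\Phi(\mathbb S^n)$ and invoke Orlicz duality; the whole argument must be performed directly on atoms, and the convergence of the series representation of $\langle f,g\rangle$ has to be tracked through the decomposition. The precise lower bound $N > 2n\alpha - 1$ is exactly what makes $\Phi$-atoms have sufficiently many vanishing moments so that the holomorphic reconstruction $g(z)=\overline{L(K(\cdot,z))}$ belongs to $\mathcal H^2(\mathbb B^n)$ with the required $\rho$-scaled oscillation. These steps are essentially the arguments of Viviani \cite{V} transported to the sphere via \cite{BG,BS}, with the exponent relation encoded in (\ref{relationrhoandphi}).
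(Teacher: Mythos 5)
The paper does not actually prove Theorem \ref{duality1}: it is quoted from Viviani \cite{V}, as transported to the holomorphic setting of the ball in \cite{BG,BS}. Your sketch reconstructs precisely that argument (atomic decomposition of $\mathcal H^\Phi$, testing a $BMO(\rho)$ function against atoms, and the converse by local $L^2$ representation of the functional), so in spirit it is the same route the paper relies on, and the overall architecture is sound.

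Two points, however, do not close as written. First, your normalization of the atomic decomposition is internally inconsistent: with atoms satisfying $\|a_j\|_\infty\le \Phi^{-1}(1/\sigma(B_j))$ the correct coefficient condition is $\sum_j\Phi\bigl(\lambda_j\Phi^{-1}(1/\sigma(B_j))\bigr)\sigma(B_j)\lesssim \|f\|_{\mathcal H^\Phi}$ (equivalently, absorb $\lambda_j$ into the atom as in the paper's $\sum_j\sigma(B_j)\Phi(\|a_j\|_m\sigma(B_j)^{-1/m})\simeq\|f\|_{\mathcal H^\Phi}$), not $\sum_j\Phi(\lambda_j)\sigma(B_j)\lesssim\|f\|_{\mathcal H^\Phi}$. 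Under the condition you state, the crucial summation $\sum_j\lambda_j\lesssim 1$ fails (take many atoms on small balls of equal height), so the forward inclusion would not follow; with the correct normalization it does, and the property that makes it work is not $\Delta_2$ or the lower type but the membership $\Phi\in\mathscr L_p$ through the monotonicity of $\Phi^{-1}(t)/t$, which gives $\Phi^{-1}(st)\le s\Phi^{-1}(t)$ for $s\le 1$ and hence $\lambda_j\le \Phi(\lambda_j\Phi^{-1}(1/\sigma(B_j)))\sigma(B_j)$. Second, in the holomorphic setting the summands of the decomposition are not the compactly supported atoms themselves but their Szeg\"o projections $P(a_j)$, i.e.\ molecules; the role of the threshold on $N$ is to guarantee that $P(a_j)$ is a molecule of high enough order to lie in $\mathcal H^\Phi$ with the norm bound (\ref{moleculeinHardy-orlicz}) (this estimate is also what you need, and do not mention, to deduce $|L(a)|\le\|L\|$ in the converse direction), rather than to make $g(z)=\overline{L(K(\cdot,z))}$ lie in $\mathcal H^2(\mathbb B^n)$. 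With these corrections your outline matches the cited proof.
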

In order to give the dual of $\mathcal{H}^\Phi(\mathbb
 B^n)$ when $\Phi\in \mathscr{U}^q,$ we need to recall the notion of complementary function of a growth function. For $\Phi$ a growth function, the complementary function, $\Psi : \mathbb R_+ \rightarrow \mathbb R_+$, is defined by

\begin{equation}\label{complementarydefinition}
\Psi(s)=\sup_{t\in\mathbb R_+}\{ts - \Phi(t)\}.
\end{equation}
We may verify that if $\Phi\in \mathscr{U}^q$, then $\Psi$ is also a growth function of lower type such that $t\mapsto \frac{\Psi(t)}{t}$ is non-decreasing but which may not  satisfy the $\Delta_2-$conditon. The fact that $\Psi$  also satisfies the $\Delta_2-$conditon is relevant in our results here. We thus introduce another class of growth functions.
\begin{defn}
We say that a growth function $\Phi$ satisfies the $\bigtriangledown_2-$condition whenever  its complementary satisfies the $\Delta_2-$ conditon.
\end{defn}

Several characterizations that guarantee that a growth function has a complementary function satisfying the $\Delta_2-$ condition are known.
One of these characterizations is the Dini condition which we recall here.
We say that  $\Phi\in \mathscr{U}^q$ satisfies the Dini condition if there exists a  constant $C>0$ such that, for $t>0$,
\begin{equation}\label{dinicondition}
\int_0^t\frac{\Phi(s)}{s^2}ds\leq C\frac{\Phi(t)}{t}.
\end{equation}
So when $\Phi$ satisfies (\ref{dinicondition}), then $\Phi$ satisfies the $\bigtriangledown_2-$condition.
From the duality result in \cite{RR}, we obtain the following result.

\begin{thm}\label{duality2}
Let $\Phi\in \mathscr{U}^q$ and $\Psi$ its complementary function. Suppose that $\Phi$ satisfies the Dini condition (\ref{dinicondition}). Then the topological dual space $(\mathcal{H}^\Phi(\mathbb
 B^n))^*$ of $\mathcal{H}^\Phi(\mathbb
 B^n)$ can be identified with  $\mathcal{H}^{\Psi}(\mathbb
 B^n)$ (with equivalent norms) under the integral pairing
$$\left\langle f,g \right\rangle = \lim_{r \rightarrow 1}\int_{\mathbb{S}^n}f(r\xi)\overline{g(r\xi)}d\sigma(\xi).$$
\end{thm}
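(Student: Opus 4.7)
The plan is to transfer the classical Orlicz-space duality on the sphere to the holomorphic setting via boundary values and the Szeg\H{o} projection. First I would realize $\mathcal{H}^\Phi(\mathbb{B}^n)$ as a closed subspace of $L^\Phi(\mathbb{S}^n)$. Because $\Phi \in \mathscr{U}^q$ satisfies the $\Delta_2$-condition, a standard Hardy-space argument (monotone convergence of $\Phi(|f_r|)$, plus uniqueness of boundary values) shows that $f \mapsto f^*$ sends $\mathcal{H}^\Phi(\mathbb{B}^n)$ isometrically onto a closed subspace $H^\Phi \subset L^\Phi(\mathbb{S}^n)$, and the same holds for $\Psi$ (which also satisfies $\Delta_2$ by the Dini hypothesis).

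Next I would invoke the Rao--Ren duality (the content of the reference \cite{RR}): since $\Phi$ satisfies both the $\Delta_2$ and the $\bigtriangledown_2$ condition, the topological dual $(L^\Phi(\mathbb{S}^n))^*$ is $L^\Psi(\mathbb{S}^n)$ under the pairing $\langle f,g\rangle = \int_{\mathbb{S}^n} f\bar g\, d\sigma$, with equivalent norms. Combined with the previous step, any $\ell \in (\mathcal{H}^\Phi(\mathbb{B}^n))^*$ extends by Hahn--Banach to $L^\Phi(\mathbb{S}^n)$ and is therefore represented by some $h \in L^\Psi(\mathbb{S}^n)$.

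The task is to replace $h$ by a holomorphic representative. Let $\mathcal{S}$ denote the Szeg\H{o} projection. For any polynomial $f$ in $\mathcal{H}^\Phi$ one has $\int_{\mathbb{S}^n} f\, \overline{h}\, d\sigma = \int_{\mathbb{S}^n} f\, \overline{\mathcal{S}h}\, d\sigma$, so setting $g := \mathcal{S}h$ gives a holomorphic candidate; by density of polynomials (which uses $\Delta_2$ for $\Phi$) the identity passes to all $f \in \mathcal{H}^\Phi$, yielding the integral pairing in the theorem after averaging over $r < 1$. Conversely, given $g \in \mathcal{H}^\Psi$, the functional $f \mapsto \lim_{r\to 1} \int_{\mathbb{S}^n} f(r\xi)\overline{g(r\xi)}\, d\sigma(\xi)$ is bounded on $\mathcal{H}^\Phi(\mathbb{B}^n)$ by the Orlicz H\"older inequality built into Proposition \ref{volbergtolokonnikov} applied to $\Phi$ and its complementary function $\Psi$.

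The main obstacle is producing $g \in \mathcal{H}^\Psi$ from $h \in L^\Psi$, i.e.\ the boundedness of the Szeg\H{o} projection $\mathcal{S} : L^\Psi(\mathbb{S}^n) \to \mathcal{H}^\Psi(\mathbb{B}^n)$. Since $\Phi$ satisfies both $\Delta_2$ and $\bigtriangledown_2$, so does $\Psi$ (the two conditions are interchanged under passage to the complementary function), and the Szeg\H{o} kernel is a Calder\'on--Zygmund type operator on the homogeneous space $(\mathbb{S}^n, d, d\sigma)$; the Orlicz analogue of the Calder\'on--Zygmund theorem under the $\Delta_2 \cap \bigtriangledown_2$ hypothesis then gives the required boundedness. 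Once this is in hand, the map $g \mapsto \langle\cdot,g\rangle$ identifies $\mathcal{H}^\Psi(\mathbb{B}^n)$ with $(\mathcal{H}^\Phi(\mathbb{B}^n))^*$ with equivalent norms, which is the claim.
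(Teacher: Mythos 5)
Your argument is correct and is essentially the route the paper intends: the paper gives no written proof of this theorem, merely invoking the Orlicz duality $(L^\Phi)^*\cong L^{\Psi}$ from \cite{RR}, and your outline supplies exactly the standard missing steps (boundary values realizing $\mathcal H^{\Phi}$ as a closed subspace of $L^{\Phi}(\mathbb S^n)$, Hahn--Banach, and the Szeg\H{o} projection to pass from a representative in $L^{\Psi}$ to one in $\mathcal H^{\Psi}$). The one ingredient needing genuine justification beyond the citation is the boundedness of the Szeg\H{o} projection on $L^{\Psi}(\mathbb S^n)$, and your appeal to Calder\'on--Zygmund/interpolation theory on Orlicz spaces under the $\Delta_2\cap\bigtriangledown_2$ hypothesis --- which holds here because the Dini condition gives $\Psi\in\Delta_2$ while $\Phi\in\mathscr U^q$ gives $\Psi\in\bigtriangledown_2$ --- handles it correctly.
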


 The orthogonal projection of $L^2(\partial \mathbb
 B^n)$ onto $\mathcal H^2(\mathbb B^n)$ is called the
 Szeg\"o projection and denoted $P$. It is given by
 \begin{equation} P(f)(z)=\int_{\partial \mathbb B^n}S(z,\xi)f(\xi)d\sigma(\xi),\end{equation}
 where $S(z,\xi)=\frac{1}{(1-\langle z,\xi \rangle)^n}$ is
 the Szeg\"o kernel on $\partial \mathbb B^n$. We denote as
 well by $P$ its extension to $L^1(\partial \mathbb
 B^n)$.

 For $b\in \mathcal H^2(\mathbb B^n)$, the small Hankel
 operator with symbol $b$ is defined for $f$ a bounded
 holomorphic function by $h_b(f):=P(b\overline f)$.

 In this paper we are interested in the
 boundedness of the small Hankel operators $h_b$ from $\mathcal H^{\Phi_1}(\mathbb
 B^n)$  to $\mathcal H^{\Phi_2}(\mathbb
 B^n)$.

 In the one dimensional case, that is the unit disc of the complex plane $\mathbb C$,  boundedness of the small Hankel operator
 between Hardy spaces has been considered in \cite{JPS} and completely solved in
 \cite{T}. A. Bonami and S. Madan in \cite{BM} used the so-called ``balayage" of Carleson measures to characterize
 symbols of bounded Hankel operators between Hardy-Orlicz spaces in the unit disc of $\mathbb C$.
It is well known that $h_b$ extends as a bounded
 operator on $\mathcal H^p(\mathbb B^n)$ for $p>1$ if and
 only if $b$ is in $BMOA$ (see \cite{CRW}).

 Recently, some of the one dimensional results have been extended to the unit ball $\mathbb B^n$. First,
 using some simple techniques, A. Bonami, S. Grellier and the first author proved in \cite{BGS} that $h_b$ is bounded on $\mathcal H^1(\mathbb
 B^n)$ if and only if $b\in BMOA(\varrho)$ with $\varrho(t)=\frac{1}{\log\left(\frac{4}{t}\right)}$.  In \cite{BG}, A. Bonami and S. Grellier using weak factorization results
 were able to characterize symbols of bounded Hankel operators from the space $\mathcal H^\Phi(\mathbb
 B^n)$  to $\mathcal H^1(\mathbb
 B^n)$, where $\Phi\in \mathscr{L}_p$. The two last works have been extended in \cite{BS} to the case of Hankel operators between two Hardy-Orlicz spaces $\mathcal H^{\Phi_1}(\mathbb
 B^n)$ and $\mathcal H^{\Phi_2}(\mathbb
 B^n)$ with $\Phi_i \in \mathscr{L}_p;$ $i=1,2.$

 Since in the papers \cite{BG} and \cite{BS} the Orlicz functions were assumed to be concave, there are many interesting cases in which the question of the boundedness of $h_b$ is still open. In \cite{ST}, the authors provided a characterization of bounded Hankel operators, $h_b$, from $\mathcal H^{\Phi_{p}}(\mathbb
 B^n)$ to $\mathcal H^q(\mathbb  B^n)$, in terms of the belonging of the symbols to some weighted Lipschtiz spaces,  here $\Phi_{p}=\left(\frac{t}{\log(e+t)}\right)^p$, $p\le 1,$ $0<q<\infty $.   We remark that $\Phi_{p}\in \mathscr{L}_p$ and  for $q>1$, $\Phi_2(t)=t^q$ is in $\mathscr{U}^q$.

 In this paper we consider the boundedness of $h_b$ between the Hardy-Orlicz spaces $\mathcal H^{\Phi_1}(\mathbb
 B^n)$ and $\mathcal H^{\Phi_2}(\mathbb
 B^n)$, where $\Phi_1$ and $\Phi_2$ are either in $\mathscr{L}_p$ or $\mathscr{U}^q$ but not both in $\mathscr{L}_p$. When the functions $\Phi_2\in \mathscr{U}^q$, we restrict to those satisfying
\begin{equation}\label{extracondition}
\lim_{x\rightarrow\infty}\frac{\Phi_2(x)}{x}=\infty.
\end{equation}
In fact, if $\Phi_2\in \mathscr{U}^q$ does not satisfy (\ref{extracondition}) then $\Phi_2$ is equivalent to $\Phi(x)=x$ so that  $\mathcal H^{\Phi_2}(\mathbb B^n)=\mathcal H^{1}(\mathbb B^n)$. This case has been settled in \cite{BG} and note that (\ref{extracondition}) implies that $q>1$.

The simple and direct approach used in \cite{ST} seems difficult to be used
here for this general  situation. We will be inspired instead by the techniques and methods in \cite{BG,BS}. The main tool is the use of the molecular decomposition of Hardy-Orlicz spaces given in \cite{BG} to obtain our needed weak factorization. The fact that molecules in the molecular decomposition in \cite{BG} can have arbitrary large order will be crucial.
In the next section we will prove the following results.

\begin{thm}\label{maintheorem1}
Let $\Phi_1\in \mathscr{L}_p$ and $\Phi_2\in \mathscr{U}^q$, $\rho_i(t)=\frac{1}{t\Phi_i^{-1}(1/t)}$ and $\Psi_2$ the complementary of $\Phi_2.$ Then the product of two functions, one in $\mathcal H^{\Phi_1}(\mathbb
 B^n)$ and the other one in $\mathcal H^{\Psi_2}(\mathbb
 B^n)$, is in $\mathcal H^{\Phi}(\mathbb
 B^n)$, with $\Phi$ such that
\begin{equation}
\rho_{\Phi}:=\frac{\rho_1}{\rho_2},
\end{equation}
or, equivalently,
\begin{equation}
\Phi^{-1}(t):=\Phi_1^{-1}(t)\Psi_2^{-1}(t).
\end{equation}
Moreover, functions in   $\mathcal H^{\Phi}(\mathbb
 B^n)$ can be weakly factorized in terms of products of functions of
$\mathcal H^{\Phi_1}(\mathbb
 B^n)$ and $\mathcal H^{\Psi_2}(\mathbb
 B^n)$.
\end{thm}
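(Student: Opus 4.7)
The proof naturally splits into two parts. For the product statement, the plan is to reduce directly to Proposition \ref{volbergtolokonnikov} on each sphere of radius $r<1$. Since $fg$ is holomorphic we have $(fg)_r = f_r g_r$, and the hypothesis $\Phi^{-1} = \Phi_1^{-1}\Psi_2^{-1}$ is precisely the relation \eqref{volbergproduct} for the triple $(\Phi_1,\Psi_2,\Phi)$; applying the proposition to the boundary values of $f_r$ and $g_r$ on $\mathbb S^n$ yields
\[
\|f_r g_r\|_{L^\Phi}^{lux} \le c \|f_r\|_{L^{\Phi_1}}^{lux} \|g_r\|_{L^{\Psi_2}}^{lux},
\]
and taking the supremum over $r$ gives the claimed product bound. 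The equivalence between the two formulations $\rho_\Phi = \rho_1/\rho_2$ and $\Phi^{-1} = \Phi_1^{-1}\Psi_2^{-1}$ is the classical identity $\Phi_2^{-1}(t)\Psi_2^{-1}(t) \simeq t$ for complementary growth functions.

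For the weak factorization I would follow the molecular strategy of \cite{BG,BS}. First, one checks that $\Phi \in \mathscr{L}_p$ for some $p>0$: the monotonicity of $\Phi^{-1}(t)/t$ (equivalently, of $\Phi(t)/t$) is obtained by combining the non-decreasing behaviour of $\Phi_1^{-1}(t)/t$ (since $\Phi_1 \in \mathscr{L}_{p_1}$) with the fact that $\Psi_2^{-1}$ itself is non-decreasing, while the lower type of $\Phi$ is inherited from that of $\Phi_1$. Thus $\mathcal H^\Phi(\mathbb B^n)$ falls within the scope of the molecular decomposition of \cite{BG}: any $f \in \mathcal H^\Phi$ can be written $f = \sum_j \lambda_j m_j$ with coefficient bounds that are controlled by $\|f\|_{\mathcal H^\Phi}$, each $m_j$ being a $(\Phi,N)$-molecule of prescribed (arbitrarily large) order $N$ adapted to a ball $B_j = B_{\delta_j}(\xi_j)\subset \mathbb S^n$. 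Concretely these molecules are normalized powers of the Szeg\"o kernel at interior points $\eta_j$ close to $\xi_j$, with a normalization constant involving $\Phi^{-1}(1/\sigma(B_j))$. Splitting $N = N_1 + N_2 + n$ and distributing the normalization via $1/\Phi^{-1}(s) = 1/(\Phi_1^{-1}(s)\Psi_2^{-1}(s))$ at $s = 1/\sigma(B_j)$ lets one factor $m_j = u_j v_j$, where $u_j$ is a standard $(\Phi_1,N_1)$-molecule in the sense of \cite{BG} and $v_j$ is the natural $\Psi_2$-analogue; summing produces $f = \sum_j \lambda_j u_j v_j$.

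The main obstacle is to verify that $v_j \in \mathcal H^{\Psi_2}(\mathbb B^n)$ with uniformly bounded Luxemburg norm. Since $\Psi_2$ is not concave (it satisfies $t \mapsto \Psi_2(t)/t$ non-decreasing), the molecular machinery of \cite{BG} does not apply to it directly, and one must estimate $\int_{\mathbb S^n}\Psi_2(|v_j(\xi)|/\lambda)\,d\sigma(\xi)$ by hand, exploiting the pointwise decay of the Szeg\"o kernel power on $\mathbb S^n$, the upper-type control on $\Psi_2$, and the freedom to enlarge $N_2$ to absorb any loss; this is precisely where the observation from the introduction that molecules may be taken of arbitrarily large order becomes essential. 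Once this uniform bound is secured, controlling the series $\sum_j \lambda_j u_j v_j$ in the appropriate Orlicz sense via the coefficient estimate from the molecular decomposition completes the weak factorization.
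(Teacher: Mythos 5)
Your first part is fine and coincides with the paper's argument: the product statement is exactly Proposition \ref{volbergtolokonnikov} applied on each sphere to the triple $(\Phi_1,\Psi_2,\Phi)$, and the equivalence of the two formulations of $\Phi$ is the identity $t\le\Phi_2^{-1}(t)\Psi_2^{-1}(t)\le 2t$. Your preliminary observation that $\Phi\in\mathscr{L}_r$ for some $r$ is also needed and correct in spirit (it is Lemma \ref{stabilityoflowertypeclass}; note the type degrades to $r=p/(p+1)$ rather than being ``inherited'' as $p$, since $\Psi_2^{-1}$ contributes an extra upper type $1$ factor, but this is harmless).

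The weak factorization step, however, contains a genuine gap. You assert that the molecules $m_j$ produced by the decomposition of \cite{BG} ``are normalized powers of the Szeg\"o kernel at interior points,'' and your factorization $m_j=u_jv_j$ rests entirely on splitting the exponent $N=N_1+N_2+n$ of such a power. This is not what the molecular decomposition gives: a molecule of order $L$ associated to $B=B(z_0,r_0)$ is \emph{any} $A\in\mathcal H^2(\mathbb B^n)$ satisfying the integral decay condition (\ref{molecule}) (they arise as Szeg\"o projections $P(a_j)$ of atoms, i.e.\ of general $L^2$ functions supported on $B_j$ with vanishing moments), so there is no exponent to split and your factorization is not defined for a general term of the sum. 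The correct move, which is the content of Theorem \ref{factorization}, is the reverse: one chooses the $\mathcal H^{\Psi_2}$ factor \emph{explicitly} as
$g(z)=\Psi_2^{-1}\bigl((1-|a|)^{-n}\bigr)\bigl((1-|a|)/(1-\langle z,a\rangle)\bigr)^{2n}$ with $a=(1-r)z_0$, proves $\|g\|^{lux}_{\mathcal H^{\Psi_2}}\lesssim 1$ by the elementary convexity lemma (Lemma \ref{Orlicznormofboundedfunction}, from \cite{LLQR}) rather than by a hand estimate with enlarged order, and then sets $f=A/g$ for the \emph{arbitrary} molecule $A$. The pointwise lower bound $|g(z)|\gtrsim\rho_2(\sigma(B))\,r^{2n}/(d(z,z_0)+r)^{2n}$ shows that dividing by $g$ costs only a fixed polynomial loss of decay, so $f$ is again a molecule of order $L'\le L-4n$ with $\|f\|_{mol(B,L')}\lesssim\|A\|_{mol(B,L)}/\rho_2(\sigma(B))$, and (\ref{moleculeinHardy-orlicz}) (valid since $\Phi_1\in\mathscr{L}_p$) converts this into the needed $\mathcal H^{\Phi_1}$ bound. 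Your correct instinct that the arbitrarily large order of the molecules is what absorbs the loss is exactly the role of the gap $L-L'\ge 4n$; but it enters through the quotient $A/g$, not through a splitting of the molecule itself.
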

\begin{thm}\label{maintheorem2}
Let $\Phi_1\in \mathscr{L}_p$ and $\Phi_2\in \mathscr{U}^q$, $\rho_i(t)=\frac{1}{t\Phi_i^{-1}(1/t)}$ and assume that $\Phi_2$ satisfies the Dini condition (\ref{dinicondition}). Then   the Hankel operator $h_b$ extends into a bounded operator from $\mathcal H^{\Phi_1}(\mathbb
 B^n)$ into $\mathcal H^{\Phi_2}(\mathbb
 B^n)$ if and only if its symbol $b$ belongs to  $ BMOA(\rho_\Phi)= (\mathcal H^{\Phi}(\mathbb
 B^n))^*,$ where
\begin{equation*}
\rho_{\Phi}:=\frac{\rho_1}{\rho_2}.
\end{equation*}
\end{thm}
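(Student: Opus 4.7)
The strategy is to couple the weak factorization of $\mathcal H^\Phi(\mathbb B^n)$ provided by Theorem \ref{maintheorem1} with the two duality identifications: Theorem \ref{duality1} for $\mathcal H^\Phi(\mathbb B^n)$ and Theorem \ref{duality2} for $\mathcal H^{\Phi_2}(\mathbb B^n)$. Throughout I use the pairing $\langle F,G\rangle=\lim_{r\to 1}\int_{\mathbb S^n}F(r\xi)\overline{G(r\xi)}d\sigma(\xi)$ and the elementary identity
\begin{equation*}
\langle h_b f, g\rangle = \langle P(b\bar f),g\rangle = \langle b\bar f, g\rangle = \langle b, fg\rangle,
\end{equation*}
valid whenever $f,g$ are holomorphic and the quantities make sense, since $P$ is self-adjoint on $L^2$ and fixes holomorphic functions. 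A preliminary observation needed for both directions is that the composition rule $\Phi^{-1}=\Phi_1^{-1}\Psi_2^{-1}$ places $\Phi$ in some class $\mathscr{L}_p$, so that Theorem \ref{duality1} identifies $(\mathcal H^\Phi(\mathbb B^n))^*$ with $BMOA(\rho_\Phi)$, and that the Dini hypothesis on $\Phi_2$ together with (\ref{extracondition}) is exactly what is required to apply Theorem \ref{duality2}.

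For the sufficiency direction, assume $b\in BMOA(\rho_\Phi)$ and fix $f$ in a dense subspace of $\mathcal H^{\Phi_1}(\mathbb B^n)$ (e.g.\ bounded holomorphic functions), so that $h_b f$ is classically defined. Theorem \ref{duality2} yields
\begin{equation*}
\|h_b f\|_{\mathcal H^{\Phi_2}}\simeq \sup_{\|g\|_{\mathcal H^{\Psi_2}}\le 1}|\langle h_b f, g\rangle|.
\end{equation*}
For each admissible $g$, rewrite $\langle h_b f,g\rangle=\langle b, fg\rangle$, apply Theorem \ref{maintheorem1} to obtain $\|fg\|_{\mathcal H^\Phi}\lesssim \|f\|_{\mathcal H^{\Phi_1}}\|g\|_{\mathcal H^{\Psi_2}}$, and then invoke Theorem \ref{duality1} to estimate $|\langle b, fg\rangle|\le C\|b\|_{BMOA(\rho_\Phi)}\|fg\|_{\mathcal H^\Phi}$. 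Chaining these inequalities yields $\|h_b f\|_{\mathcal H^{\Phi_2}}\lesssim \|b\|_{BMOA(\rho_\Phi)}\|f\|_{\mathcal H^{\Phi_1}}$, and $h_b$ extends by density to all of $\mathcal H^{\Phi_1}(\mathbb B^n)$.

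For the necessity direction, assume $h_b$ extends boundedly. Given $F\in\mathcal H^\Phi(\mathbb B^n)$, invoke the weak factorization part of Theorem \ref{maintheorem1} to write $F=\sum_j f_j g_j$ with $f_j\in \mathcal H^{\Phi_1}(\mathbb B^n)$, $g_j\in \mathcal H^{\Psi_2}(\mathbb B^n)$, and $\sum_j\|f_j\|_{\mathcal H^{\Phi_1}}\|g_j\|_{\mathcal H^{\Psi_2}}\lesssim \|F\|_{\mathcal H^\Phi}$. Then
\begin{equation*}
|\langle b, F\rangle| = \Bigl|\sum_j \langle h_b f_j, g_j\rangle\Bigr| \le \|h_b\|\sum_j\|f_j\|_{\mathcal H^{\Phi_1}}\|g_j\|_{\mathcal H^{\Psi_2}}\lesssim \|h_b\|\,\|F\|_{\mathcal H^\Phi},
\end{equation*}
so $F\mapsto \langle b, F\rangle$ defines a bounded linear functional on $\mathcal H^\Phi(\mathbb B^n)$, which Theorem \ref{duality1} identifies with $b\in BMOA(\rho_\Phi)$. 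The chief obstacle is the quantitative form of the weak factorization: one needs not merely that $\mathcal H^\Phi(\mathbb B^n)$ equals the sum of products of functions from $\mathcal H^{\Phi_1}$ and $\mathcal H^{\Psi_2}$, but the summable norm estimate above with uniform constant, whose proof relies on the molecular decomposition mentioned in the introduction. A secondary technicality is checking that $\Phi$ really lies in $\mathscr{L}_p$, which amounts to tracking how the type exponents combine under $\Phi^{-1}=\Phi_1^{-1}\Psi_2^{-1}$ and verifying the monotonicity of $\Phi(t)/t$ from those of $\Phi_1(t)/t$ and $\Psi_2(t)/t$.
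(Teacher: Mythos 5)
Your argument matches the paper's: the sufficiency is exactly the chain duality (Theorem \ref{duality2}) $\to$ the identity $\langle h_bf,g\rangle=\langle b,fg\rangle$ $\to$ the product estimate of Proposition \ref{volbergtolokonnikov} $\to$ duality (Theorem \ref{duality1}), and the necessity rests on the quantitative weak factorization obtained from the molecular decomposition together with Theorem \ref{factorization}, which is precisely the ingredient the paper substitutes into the argument of \cite{BS}. The only cosmetic difference is that the paper's template (carried out explicitly in the proof of Theorem \ref{maintheorem3}) verifies the $BMOA(\rho_\Phi)$ condition ball by ball, factoring one molecule $P(a_l)$ at a time, rather than summing a global factorization of an arbitrary $F\in\mathcal H^{\Phi}(\mathbb B^n)$; both reduce to the same estimate, and you correctly identify the summable norm control in the weak factorization (and the fact that $\Phi\in\mathscr{L}_r$, i.e.\ Lemma \ref{stabilityoflowertypeclass}) as the points that carry the real weight.
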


In section \ref{section2}, we study the boundedness of the Hankel operators $h_b$ from $\mathcal H^{\Phi_1}(\mathbb
 B^n)$ into $\mathcal H^{\Phi_2}(\mathbb
 B^n)$, when $\Phi_i\in \mathscr{U}^q,\;i=1,2.$ To deal with this situation, because of the convexity of both growth functions, we will have to  rewrite in a slightly general form, the molecular decomposition in \cite{BG}. This will allow us to obtain other weak factorization results for functions in $\mathcal H^{\Phi}(\mathbb B^n)$, with $\Phi\in \mathscr{L}_p,$ in terms of products of functions  of $\mathcal H^{\Phi_1}(\mathbb
 B^n)$ and $\mathcal H^{\Phi_2}(\mathbb
 B^n)$, with $\Phi_i\in \mathscr{U}^q,\;i=1,2.$ This generalizes the classical result in \cite{CRW,GP}. We obtain in this situation the following result.

\begin{thm}
Let $\Phi_1$ and $\Phi_2$ in $ \mathscr{U}^q$, and $\rho_i(t)=\frac{1}{t\Phi_i^{-1}(1/t)}$. \\
We suppose that:
\begin{itemize}
\item[(i)] $\Phi_2$ satisfies the Dini condition (\ref{dinicondition})
\item[(ii)]  $\frac{\Phi_1^{-1}(t)\Psi_2^{-1}(t)}{t}$ is non-decreasing or $\Phi_1=\Phi_2.$
\end{itemize}
 Then   the Hankel operator $h_b$ extends into a bounded operator from $\mathcal H^{\Phi_1}(\mathbb
 B^n)$ into $\mathcal H^{\Phi_2}(\mathbb
 B^n)$ if and only if its symbol $b$ belongs to  $ BMOA(\rho_\Phi),$ where
\begin{equation*}
\rho=\rho_{\Phi}:=\frac{\rho_1}{\rho_2}.
\end{equation*}
\end{thm}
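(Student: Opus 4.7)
The plan is to mimic the duality scheme of Theorem \ref{maintheorem2}, with the roles of $\mathcal H^{\Phi_1}$ and $\mathcal H^{\Phi_2}$ bridged by the auxiliary Hardy-Orlicz space $\mathcal H^{\Phi}$, where $\Phi$ is the Orlicz function with $\Phi^{-1}=\Phi_1^{-1}\Psi_2^{-1}$ (equivalent to $\rho_\Phi=\rho_1/\rho_2$ through the pointwise identity $\Psi_2^{-1}(t)\Phi_2^{-1}(t)\simeq t$). Two preliminary facts enter repeatedly: first, the Dini condition on $\Phi_2$ forces $\Psi_2$ to satisfy the $\Delta_2$-condition, so $\mathcal H^{\Phi_2}$ is reflexive and $(\mathcal H^{\Psi_2})^{*}=\mathcal H^{\Phi_2}$; second, assumption (ii) makes $\Phi^{-1}(t)/t$ non-decreasing, equivalently $\Phi\in\mathscr{L}_p$ for some $p$, so that Theorem \ref{duality1} identifies $(\mathcal H^\Phi)^{*}=BMOA(\rho_\Phi)$.

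For the sufficiency direction, suppose $b\in BMOA(\rho_\Phi)$ and fix a bounded holomorphic $f$ together with $g\in\mathcal H^{\Psi_2}$. The standard identity $\langle h_b f,g\rangle=\langle b,fg\rangle$, obtained by transferring the Szeg\"o projection onto $g$, combines with Proposition \ref{volbergtolokonnikov}, which provides $\|fg\|_{\mathcal H^{\Phi}}\lesssim\|f\|_{\mathcal H^{\Phi_1}}\|g\|_{\mathcal H^{\Psi_2}}$, and with Theorem \ref{duality1}, which yields $|\langle b,fg\rangle|\lesssim\|b\|_{BMOA(\rho_\Phi)}\|fg\|_{\mathcal H^{\Phi}}$. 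Taking the supremum over $g$ in the unit ball of $\mathcal H^{\Psi_2}$ and invoking the reflexivity of $\mathcal H^{\Phi_2}$ produces $\|h_b f\|_{\mathcal H^{\Phi_2}}\lesssim\|b\|\,\|f\|_{\mathcal H^{\Phi_1}}$, and a density argument extends this to all $f\in\mathcal H^{\Phi_1}$.

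For necessity, assume $h_b$ is bounded. By Theorem \ref{duality1} it suffices to prove $|\langle b,h\rangle|\lesssim\|h\|_{\mathcal H^{\Phi}}$ for every $h\in\mathcal H^{\Phi}$. The crucial input here is the weak factorization of $\mathcal H^{\Phi}$ into products from $\mathcal H^{\Phi_1}$ and $\mathcal H^{\Psi_2}$ established in Section \ref{section2}: since $\Psi_2$, being the Legendre complement of a convex growth function, is itself convex and of $\mathscr{U}^{q'}$-type, both factor spaces fall into the convex class treated in that section. This produces a decomposition $h=\sum_k f_k g_k$ with $\sum_k\|f_k\|_{\mathcal H^{\Phi_1}}\|g_k\|_{\mathcal H^{\Psi_2}}\lesssim\|h\|_{\mathcal H^{\Phi}}$. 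Then $\langle b,h\rangle=\sum_k\langle h_b f_k,g_k\rangle$, and the duality $(\mathcal H^{\Phi_2})^{*}=\mathcal H^{\Psi_2}$ bounds each summand by $\|h_b\|\,\|f_k\|_{\mathcal H^{\Phi_1}}\|g_k\|_{\mathcal H^{\Psi_2}}$, after which summation concludes.

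The main obstacle is the weak factorization itself: the generalized molecular decomposition of $\mathcal H^{\Phi}$ in Section \ref{section2} must produce molecules that can be dominated by products of two factors from \emph{convex} Hardy-Orlicz spaces, with condition (ii) guaranteeing that the product space $\mathcal H^{\Phi}$ lies in $\mathscr{L}_p$ so that Theorem \ref{duality1} applies. The degenerate alternative $\Phi_1=\Phi_2$ collapses to $\Phi\simeq t$, $\rho_\Phi\equiv 1$ and $\mathcal H^{\Phi}=\mathcal H^{1}$, reducing the theorem to the classical statement that $h_b:\mathcal H^{\Phi_1}\to\mathcal H^{\Phi_1}$ is bounded iff $b\in BMOA$, which is handled separately via the Coifman-Rochberg-Weiss factorization in its Orlicz form.
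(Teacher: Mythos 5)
Your sufficiency argument is exactly the paper's: the chain $\|h_bf\|_{\mathcal H^{\Phi_2}}\simeq\sup_{\|g\|^{lux}_{\mathcal H^{\Psi_2}}=1}|\langle b,fg\rangle|\lesssim\|b\|_{BMOA(\rho_\Phi)}\|fg\|^{lux}_{\mathcal H^{\Phi}}\lesssim\|b\|\,\|f\|^{lux}_{\mathcal H^{\Phi_1}}$, resting on Theorem \ref{duality2}, Theorem \ref{duality1} and Proposition \ref{volbergtolokonnikov}, with condition (ii) serving only to put $\Phi$ in $\mathscr{L}_p$ so that $(\mathcal H^\Phi)^*=BMOA(\rho_\Phi)$ is available. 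For necessity, however, you take a genuinely different route from the paper. You pass through a global weak factorization $h=\sum_k f_kg_k$ of $\mathcal H^\Phi$ and then use the duality $(\mathcal H^{\Phi_2})^*=\mathcal H^{\Psi_2}$ termwise. The paper instead never factorizes a general element of $\mathcal H^\Phi$: it fixes a ball $B$, tests $b$ against $L^m$ functions $l$ supported on $B$, forms the $m$-atom $a_l=\chi_B(l-R(l))$ and factorizes only the single molecule $P(a_l)=f_lg_l$ via Theorem \ref{factorization2gene}, which yields the $BMO(\rho,m)$ estimate for $b$ directly; the price is the technical inequality (\ref{estimate5}) on the $L^m$-boundedness of the projection onto $\mathcal P_N(B)$, proved by equivalence of norms on the finite-dimensional polynomial space. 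Your route buys a cleaner duality statement and, as a by-product, an explicit weak factorization theorem; the paper's route avoids having to sum over the molecular decomposition.

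The one point you should not treat as already available is precisely that weak factorization. Section \ref{section2} states it only for $\mathcal H^p$ with $\Phi_1=\Phi_2$ (Theorem \ref{weakfactoHp}); for the mixed pair $(\mathcal H^{\Phi_1},\mathcal H^{\Psi_2})$ you must assemble it yourself from Theorem \ref{moleculardecomposition2} (legitimate since (ii) gives $\Phi\in\mathscr{L}_p$) and Theorem \ref{factorization2gene} with $p=1$, and then justify the summability
\begin{equation*}
\sum_j\|f_j\|^{lux}_{\mathcal H^{\Phi_1}}\|g_j\|^{lux}_{\mathcal H^{\Psi_2}}\lesssim\sum_j\frac{\|A_j\|_{mol(B_j,L,m)}}{\Phi^{-1}(1/\sigma(B_j))}\lesssim\|h\|^{lux}_{\mathcal H^{\Phi}},
\end{equation*}
which requires the concavity of $\Phi$ (the monotonicity of $\Phi(t)/t$ gives $c/\Phi^{-1}(1/s)\le s\,\Phi(c)$ whenever $\Phi(c)s\le 1$) exactly as in \cite{BS}; this is routine but is a step, not a citation. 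Note also that the Dini hypothesis on $\Phi_2$ is what makes $\Psi_2$ satisfy the $\Delta_2$-condition, hence lie in some $\mathscr{U}^{q'}$, so that Lemma \ref{anorliczfunction2} and Theorem \ref{factorization2gene} genuinely apply to the pair you use. Finally, the alternative $\Phi_1=\Phi_2$ in (ii) need not be handled by a separate appeal to \cite{CRW}: it is there only because then $\Phi(t)\simeq t$ lies trivially in $\mathscr{L}_1$ even if $\Phi^{-1}(t)/t$ is not monotone, and the same proof (with Theorem \ref{weakfactoHp} at $p=1$ supplying your factorization) goes through unchanged.
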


 Finally, throughout the paper, $C$ will be a constant not necessarily the same at each occurrence. We will also use the notation $C(k)$
 to express the fact that the constant depends on the underlined parameter. Given two positive quantities $A$ and $B$, the notation
 $A\lesssim B$ means that $A\le CB$ for some positive constant $C$. When $A\lesssim B$ and $B\lesssim A$, we write $A\backsimeq B$.

 \section{Boundedness of $h_b$: $\mathcal H^{\Phi_1}(\mathbb
 B^n)\rightarrow \mathcal H^{\Phi_2}(\mathbb
 B^n)$;  $(\Phi_1,\Phi_2)\in \mathscr{L}_p \times \mathscr{U}^q$ }
 The section is devoted to the proof of Theorem \ref{maintheorem1} and Theorem \ref{maintheorem2}.
\subsection{Some properties of growth functions}
We collect in this subsection few properties of growth functions we shall use later.

We start with this useful proposition which gives a relation between functions in the classes $\mathscr{L}_p$ and $\mathscr{U}^q.$
\begin{prop}\label{phiandinverse}
The following assertion holds:\\
\begin{center}
    $\Phi\in \mathscr{L}_p$ if and only if $\Phi^{-1}\in \mathscr{U}^{1/p}.$
\end{center}
\end{prop}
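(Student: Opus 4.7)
The proposition splits into two independent equivalences to be checked: first, the monotonicity condition ($t\mapsto \Phi(t)/t$ non-increasing iff $t\mapsto \Phi^{-1}(t)/t$ non-decreasing), and second, the type condition (lower type $p$ for $\Phi$ iff upper type $1/p$ for $\Phi^{-1}$). Both reduce to straightforward substitutions based on the fact that $\Phi$ is a continuous non-decreasing bijection of $[0,\infty)$, so $\Phi^{-1}$ is well-defined (or taken as the right-continuous generalized inverse if $\Phi$ has flat pieces, with the same arguments going through).

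For the monotonicity, my plan is to take $0<s_1\le s_2$, set $t_i=\Phi^{-1}(s_i)$ so that $t_1\le t_2$, and rearrange the inequality $\Phi(t_1)/t_1\ge \Phi(t_2)/t_2$ directly into $\Phi^{-1}(s_1)/s_1\le \Phi^{-1}(s_2)/s_2$. The converse is obtained by swapping the roles of $\Phi$ and $\Phi^{-1}$ in the same argument.

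For the type condition, I would start from $\Phi(ts)\le Ct^p\Phi(s)$ for $0<t\le 1$, substitute $s=\Phi^{-1}(u)$, and apply $\Phi^{-1}$ to both sides to get $t\Phi^{-1}(u)\le \Phi^{-1}(Ct^p u)$. Writing $u=\mu v$ and choosing $t=(C\mu)^{-1/p}$, this rearranges to
\begin{equation*}
\Phi^{-1}(\mu v)\le C^{1/p}\mu^{1/p}\Phi^{-1}(v),
\end{equation*}
which is exactly upper type $1/p$ for $\Phi^{-1}$, valid in the range of $\mu$ for which $t\in(0,1]$. The converse is the mirror argument: from $\Phi^{-1}(\mu s)\le C\mu^{1/p}\Phi^{-1}(s)$ for $\mu\ge 1$, substitute $s=\Phi(u)$, apply $\Phi$, and rearrange to obtain the lower type inequality for $\Phi$.

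The main obstacle is purely bookkeeping: I need to verify that the substitution lands in the correct parameter range ($t\in(0,1]$ on one side, $\mu\ge 1$ on the other) and track the constants carefully. In the reverse direction of the type statement, the substitution naturally yields $\Phi(tu)\le C't^p\Phi(u)$ only for $t$ below some threshold depending on the constant from the upper type inequality; the remaining finite range of $t$ near $1$ is handled trivially by using that $\Phi$ is non-decreasing and absorbing a fixed multiplicative constant. No deeper analytic ingredient is needed.
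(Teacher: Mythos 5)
Your proposal is correct and follows essentially the same route as the paper: the monotonicity equivalence is a direct substitution, and the type equivalence is obtained by the same choice of parameters $s=(Cx)^{-1/p}$, $t=\Phi^{-1}(y)(Cx)^{1/p}$ (in your notation, $t=(C\mu)^{-1/p}$ with $u=\mu v$) that the paper uses. Your extra care about the parameter range in the reverse direction (absorbing the range $1/C<t\le 1$ by monotonicity) is a correct filling-in of the paper's terse ``the arguments could be reversed.''
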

\begin{proof}
It is clear that $\Phi$ is a growth function if and only if $\Phi^{-1}$ is a growth function. Also, it is clear that $\frac{\Phi(t)}{t}$ is non-increasing if and only if  $\frac{\Phi^{-1}(t)}{t}$ is non-decreasing. Hence what is left to prove is that $\Phi$ is of lower type $p$ if and only if $\Phi^{-1}$ is of upper type $1/p$.

Suppose there exists $C>1$ so that for every $s\leq 1$ and all $t>0$, we have
\begin{eqnarray}\label{lowerp}
\Phi(st)\leq Cs^p\Phi(t).
\end{eqnarray}
Let $x\geq 1$ and $y>0$. Applying inequality (\ref{lowerp}) to
 $s=\frac 1{(Cx)^{1/p}}\;\textrm{and}\; t=\Phi^{-1}(y) (Cx)^{1/p},$ we obtain

\begin{eqnarray*}
y\leq \frac 1{x}\Phi\left(\Phi^{-1}(y) (Cx)^{1/p}\right),
\end{eqnarray*}
and consequently, $\Phi^{-1}(xy)\leq Ax^{1/p}\Phi^{-1}(y).$ Hence $\Phi^{-1}$ is of upper type $1/p$. The arguments could be reversed. This ends the proof of the proposition.
\end{proof}

\begin{lem}\label{stabilityoflowertypeclass}
Let $\Phi_1\in \mathscr{L}_p$ and $\Phi_2\in \mathscr{U}^q$, and $\Psi_2$ the complementary function of $\Phi_2.$ Let $\Phi$ be such that
\begin{equation*}
\Phi^{-1}(t):=\Phi_1^{-1}(t)\Psi_2^{-1}(t).
\end{equation*}
Then $\Phi\in \mathscr{L}_r$ for some $r\leq p.$
\end{lem}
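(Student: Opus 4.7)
The plan is to reduce everything to statements about $\Phi^{-1}$ via Proposition~\ref{phiandinverse}. Showing $\Phi\in\mathscr{L}_r$ for some $r\leq p$ is equivalent to showing that $\Phi^{-1}$ is of upper type $1/r$ for some $1/r\geq 1/p$ together with $t\mapsto \Phi^{-1}(t)/t$ being non-decreasing. Since $\Phi^{-1}=\Phi_1^{-1}\cdot\Psi_2^{-1}$ is given as a product, both properties should propagate if we first pin down what we know about the two factors.

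First I would verify the monotonicity factor. Writing
$$\frac{\Phi^{-1}(t)}{t}=\frac{\Phi_1^{-1}(t)}{t}\cdot \Psi_2^{-1}(t),$$
the assumption $\Phi_1\in\mathscr{L}_p$ forces (via Proposition~\ref{phiandinverse}) that $\Phi_1^{-1}(t)/t$ is non-decreasing, while $\Psi_2^{-1}$ is non-decreasing simply because it is a growth function. The product of two non-decreasing non-negative functions is non-decreasing, so $\Phi^{-1}(t)/t$ is non-decreasing, hence $\Phi(t)/t$ is non-increasing.

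Next I would establish the upper-type bound. Proposition~\ref{phiandinverse} gives that $\Phi_1^{-1}$ is of upper type $1/p$, so $\Phi_1^{-1}(\tau u)\leq C\tau^{1/p}\Phi_1^{-1}(u)$ for $\tau\geq 1$. For the second factor, I use the fact recorded earlier in the paper that the complementary function $\Psi_2$ of $\Phi_2\in\mathscr{U}^q$ satisfies that $t\mapsto \Psi_2(t)/t$ is non-decreasing. Inverting this inequality (if $s=\Psi_2^{-1}(\tau u)$ and $t=\Psi_2^{-1}(u)$ with $\tau\geq 1$, then $s\geq t$ and $\Psi_2(s)/s\geq \Psi_2(t)/t$ yields $s\leq \tau t$) gives
$$\Psi_2^{-1}(\tau u)\leq \tau\,\Psi_2^{-1}(u), \qquad \tau\geq 1.$$
Multiplying the two estimates yields $\Phi^{-1}(\tau u)\leq C\tau^{1+1/p}\Phi^{-1}(u)$, so $\Phi^{-1}$ is of upper type $1+1/p$. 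Applying Proposition~\ref{phiandinverse} once more, $\Phi$ is of lower type $r$ with $r=p/(p+1)$; since $p\leq 1$ one has $r\leq p$, which is the claim.

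The only potentially non-routine step is the conversion of the monotonicity of $\Psi_2(t)/t$ into an upper-type bound for $\Psi_2^{-1}$, but this is a short elementary inversion argument as above. Nothing else uses the Dini condition, the $\nabla_2$-condition, or the quantitative upper type $q$ of $\Phi_2$; the proof relies only on the structural properties of $\mathscr{L}_p$, $\mathscr{U}^q$, and the complementary function inherited from Young duality.
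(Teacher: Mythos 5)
Your proof is correct and follows essentially the same route as the paper: reduce to $\Phi^{-1}$ via Proposition~\ref{phiandinverse}, get monotonicity of $\Phi^{-1}(t)/t$ from the factorization, and combine the upper type $1/p$ of $\Phi_1^{-1}$ with the sublinearity $\Psi_2^{-1}(\tau u)\leq \tau\Psi_2^{-1}(u)$ to obtain upper type $1+1/p$, hence $r=p/(p+1)\leq p$. Your explicit inversion argument for $\Psi_2^{-1}$ is just a spelled-out version of the paper's direct appeal to the fact that $\Psi_2^{-1}(t)/t$ is non-increasing.
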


\begin{proof}
Let us write $\frac{\Phi^{-1}(t)}{t}=\left(\frac{\Phi_1^{-1}(t)}{t}\right)\Psi_2^{-1}(t)$ and remark that $\frac{\Phi_1^{-1}(t)}{t}$ and $\Psi_2^{-1}(t)$
are non-decreasing. We deduce easily that $\frac{\Phi^{-1}(t)}{t}$ is non-decreasing.

By Proposition \ref{phiandinverse}, it just remains to prove that $\Phi^{-1}$ is of upper type $1/r$.
Let $s\geq 1$ and $t>0$, applying Proposition \ref{phiandinverse} to $\Phi_1$ and using the fact that  $\frac{\Psi_2^{-1}(t)}{t}$ is non-increasing, we obtain
\begin{eqnarray*}
\Phi^{-1}(st)&=&\Phi^{-1}_1(st)\Psi_2^{-1}(st)\\
&\lesssim& s^{1/p}\Phi^{-1}_1(t)st \frac{\Psi_2^{-1}(st)}{st}\\
&\lesssim &s^{\frac{p+1}{p}}\Phi^{-1}(t).
\end{eqnarray*}
\end{proof}

\begin{lem}
Let $\Phi_1$ be a growth function and $\Phi_2\in \mathscr{U}^q$, $\rho_i(t)=\frac{1}{t\Phi_i^{-1}(1/t)}$ and $\Psi_2$ the complementary of $\Phi_2.$ Then if
$$\rho_{\Phi}:=\frac{\rho_1}{\rho_2},$$
we also have
\begin{equation*}
\Phi^{-1}(t)\simeq \Phi_1^{-1}(t)\Psi_2^{-1}(t)
\end{equation*}
and vice-versa.
\end{lem}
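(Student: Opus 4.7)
The plan is to observe that the identity $\rho_\Phi = \rho_1/\rho_2$ is, up to multiplicative constants, a purely algebraic reformulation of $\Phi^{-1}(t) \simeq \Phi_1^{-1}(t)\Psi_2^{-1}(t)$, the bridge between the two formulations being the classical fact that complementary growth functions satisfy $\Phi_2^{-1}(s)\Psi_2^{-1}(s) \simeq s$.

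First I would unfold the definitions. The hypothesis $\rho_\Phi(t)=\rho_1(t)/\rho_2(t)$ reads
$$\frac{1}{t\,\Phi^{-1}(1/t)} \;=\; \frac{\Phi_2^{-1}(1/t)}{\Phi_1^{-1}(1/t)},$$
which, after setting $s=1/t$, is equivalent to
$$\Phi^{-1}(s) \;=\; \frac{s\,\Phi_1^{-1}(s)}{\Phi_2^{-1}(s)}.$$
At this point I would invoke the Young-type inequality for complementary growth functions: since $\Psi_2$ is the complementary of $\Phi_2$, one has
$$ s \;\le\; \Phi_2^{-1}(s)\,\Psi_2^{-1}(s) \;\le\; 2s, \qquad s\ge 0,$$
a standard fact from Orlicz space theory (and valid in the present setting since $\Phi_2\in\mathscr{U}^q$). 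Equivalently $s/\Phi_2^{-1}(s)\simeq\Psi_2^{-1}(s)$, and substituting this into the previous display yields $\Phi^{-1}(s)\simeq \Phi_1^{-1}(s)\Psi_2^{-1}(s)$.

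The converse direction is obtained by reversing the chain: starting from $\Phi^{-1}(t)\simeq \Phi_1^{-1}(t)\Psi_2^{-1}(t)$ and again using $\Phi_2^{-1}\Psi_2^{-1}\simeq \mathrm{id}$, one recovers $\Phi^{-1}(s)\simeq s\,\Phi_1^{-1}(s)/\Phi_2^{-1}(s)$; taking reciprocals, multiplying by $1/s$, and putting $t=1/s$ gives $\rho_\Phi(t)\simeq \rho_1(t)/\rho_2(t)$. The only non-formal ingredient in the argument is the Young-type estimate $\Phi_2^{-1}(s)\Psi_2^{-1}(s)\simeq s$; once that is in hand, the rest is a direct substitution and I do not anticipate any genuine obstacle.
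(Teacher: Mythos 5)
Your argument is correct and is essentially identical to the paper's: both reduce the claim to the equivalence $\rho_2(t)\simeq \Psi_2^{-1}(1/t)$, which is exactly the Young-type estimate $t\leq \Phi_2^{-1}(t)\Psi_2^{-1}(t)\leq 2t$ that you invoke, followed by the same algebraic substitution. The only difference is that you write out the bookkeeping the paper leaves implicit.
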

\begin{proof}
It is enough to prove that
\begin{equation}\label{relationcomplementaryandrho}
\rho_2(t)\simeq \Psi_2^{-1}(1/t).
\end{equation}

This follows easily from the fact that
\begin{equation}\label{relationfunctionandcomplementary}
t\leq \Phi_2^{-1}(t)\Psi_2^{-1}(t)\leq 2t.
\end{equation}
\end{proof}

\begin{lem}\label{decreasing}
Let $\Phi\in \mathscr{U}^q.$ Then for  $0<m<\infty$, $\Phi_m(t):=\Phi(t^{1/m})$ is of lower type $1/m$ and if moreover $m$ is
large enough, then $\frac{\Phi(t)}{t^m}$ is non-increasing.
\end{lem}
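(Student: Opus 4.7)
The plan is to split the lemma into its two independent assertions and deduce each from one of the two basic properties defining the class $\mathscr{U}^q$.

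For the first assertion, my observation is that any $\Phi \in \mathscr{U}^q$ is automatically of lower type $1$ with constant $1$: since $t \mapsto \Phi(t)/t$ is non-decreasing, $\Phi(\lambda s) \leq \lambda \Phi(s)$ for every $s > 0$ and $\lambda \in (0,1]$. I would then apply this with $\lambda = t^{1/m}$ (which lies in $(0,1]$ precisely when $0 < t \le 1$) and with $s$ replaced by $s^{1/m}$, yielding
\[
\Phi_m(st) \;=\; \Phi\bigl((st)^{1/m}\bigr) \;\le\; t^{1/m}\,\Phi(s^{1/m}) \;=\; t^{1/m}\,\Phi_m(s),
\]
which is exactly the lower type $1/m$ bound for $\Phi_m$.

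For the second assertion, the technical point is that the raw upper type inequality $\Phi(st) \leq C\,t^q\Phi(s)$ carries a multiplicative constant $C$, which prevents one from concluding directly that $\Phi(t)/t^m$ is actually non-increasing (as opposed to merely decreasing up to a constant). To get around this, I would invoke the remark made just after the definition of $\mathscr{U}^q$: we may replace $\Phi$ by the equivalent growth function $\int_0^t \Phi(s)/s\,ds$ and thereby assume that $\Phi$ is $\mathcal{C}^1$ with $\Phi'(t) \simeq \Phi(t)/t$. Fixing a constant $c$ with $\Phi'(t) \leq c\,\Phi(t)/t$ for all $t > 0$, for any $m \geq c$ I would compute
\[
\bigl(\Phi(t)/t^m\bigr)' \;=\; t^{-m-1}\bigl(t\Phi'(t) - m\Phi(t)\bigr) \;\le\; t^{-m-1}(c - m)\Phi(t) \;\le\; 0,
\]
so $\Phi(t)/t^m$ is non-increasing, as claimed.

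The main obstacle I anticipate is precisely the second claim: one has to recognize that the crude upper type condition (with constant $C$) is insufficient for a pointwise monotonicity statement, and upgrade it to the sharper differential inequality $\Phi'(t) \lesssim \Phi(t)/t$ via the standard smoothing of $\Phi$ already introduced in the paper. Once this is available, the elementary derivative computation above closes the argument, and the choice of $m$ is simply any number at least as large as the constant $c$ produced by the equivalence $\Phi'(t) \simeq \Phi(t)/t$.
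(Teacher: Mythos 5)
Your proposal is correct and follows essentially the same route as the paper: the lower type $1/m$ claim is reduced to the lower type $1$ property of $\Phi$ (which you derive, with constant $1$, from the monotonicity of $\Phi(t)/t$), and the monotonicity of $\Phi(t)/t^m$ is obtained from the same derivative computation using the normalization $\Phi'(t)\le c\,\Phi(t)/t$ and taking $m\ge c$. The only difference is that you make explicit the reliance on the smoothing convention $\Phi'(t)\simeq \Phi(t)/t$, which the paper also invokes (it recalls the two-sided bound $c_1\Phi(t)/t\le\Phi'(t)\le c_2\Phi(t)/t$ before differentiating).
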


\begin{proof}
Clearly, $\Phi_m$ is of lower type $1/m$ since $\Phi$ is of lower type $1$.
Let us recall that there are constants $c_1>0$ and $c_2>0$ such that for any $t>0$, $$c_1\frac{\Phi(t)}{t}\le \Phi'(t)\le c_2\frac{\Phi(t)}{t}.$$
Now, put $g(t)=\frac{\Phi(t)}{t^m}$ with $m\ge c_2$. We obtain easily that
\begin{eqnarray*}
g'(t) &=& \frac{t\Phi'(t)-m\Phi(t)}{t^{m+1}}\\ &\le& \frac{(c_2-m)\Phi(t)}{t^{m+1}}\\ &\le& 0.
\end{eqnarray*}
\end{proof}

\begin{lem}\label{conditionforlowertype}
Let $\Phi_1$ and $\Phi_2$ be in $ \mathscr{U}^q$, and $\Psi_2$ the complementary function of $\Phi_2$.  Let $\Phi$ be such that $\Phi^{-1}(t)=\Phi_1^{-1}(t)\Psi_2^{-1}(t)$. We suppose that $\Phi_2$ satisfies the Dini condition (\ref{dinicondition}) and that $$\frac{\Phi_2^{-1}\circ\Phi_1(t)}{t}\quad\textrm{is non-increasing}.$$
Then $\Phi\in \mathscr{L}_p$ for some $p>0$.
\end{lem}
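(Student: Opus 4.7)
The plan is to verify both defining properties of $\mathscr{L}_p$ for $\Phi$: (a) that $\Phi(t)/t$ is non-increasing, and (b) that $\Phi$ has lower type $p$ for some $p \in (0,1]$. By the remark in the excerpt on replacing a growth function by an equivalent representative, both conditions are required only up to growth-function equivalence.

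First I would establish (a) by rewriting $\Phi^{-1}(t)/t$ in a form to which the hypothesis applies. The two-sided bound (\ref{relationfunctionandcomplementary}) gives $\Psi_2^{-1}(t) \simeq t/\Phi_2^{-1}(t)$, and hence
$$\frac{\Phi^{-1}(t)}{t} \;=\; \frac{\Phi_1^{-1}(t)\,\Psi_2^{-1}(t)}{t} \;\simeq\; \frac{\Phi_1^{-1}(t)}{\Phi_2^{-1}(t)}.$$
Substituting $t = \Phi_1(s)$ in the hypothesis that $s\mapsto\Phi_2^{-1}(\Phi_1(s))/s$ is non-increasing yields that $\Phi_2^{-1}(t)/\Phi_1^{-1}(t)$ is non-increasing in $t$, i.e. $\Phi_1^{-1}(t)/\Phi_2^{-1}(t)$ is non-decreasing. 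To pass from ``non-decreasing up to equivalence'' to genuine non-decrease, I would replace $\Phi$ by the equivalent growth function $\tilde\Phi$ defined by $\tilde\Phi^{-1}(t) := t\,\Phi_1^{-1}(t)/\Phi_2^{-1}(t)$; by (\ref{relationfunctionandcomplementary}), $\tilde\Phi^{-1} \simeq \Phi^{-1}$, while $\tilde\Phi^{-1}(t)/t = \Phi_1^{-1}(t)/\Phi_2^{-1}(t)$ is non-decreasing outright, so $\tilde\Phi(t)/t$ is non-increasing.

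Next, for (b), both $\Phi_i \in \mathscr{U}^q$ have $\Phi_i(t)/t$ non-decreasing, equivalently $\Phi_i^{-1}(t)/t$ non-increasing, which gives $\Phi_1^{-1}(st) \le s\,\Phi_1^{-1}(t)$ for $s \ge 1$. Similarly, since $\Psi_2$ is complementary to $\Phi_2 \in \mathscr{U}^q$, the text noted after (\ref{complementarydefinition}) that $\Psi_2(t)/t$ is non-decreasing, whence $\Psi_2^{-1}(st) \le s\,\Psi_2^{-1}(t)$ for $s \ge 1$ by the same argument. Multiplying,
$$\Phi^{-1}(st) \;\le\; s^{2}\,\Phi^{-1}(t), \qquad s \ge 1,\ t > 0,$$
so $\Phi^{-1}$ has upper type $2$, and by Proposition~\ref{phiandinverse} $\Phi$ has lower type $p = 1/2 \le 1$. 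Combined with (a), this places $\Phi$ (via its equivalent representative $\tilde\Phi$) in $\mathscr{L}_{1/2}$.

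The lower-type computation is essentially bookkeeping with the monotonicity built into the definition of $\mathscr{U}^q$. The main obstacle is the monotonicity step: rephrasing the hypothesis on $\Phi_2^{-1}\circ\Phi_1$ as monotonicity of $\Phi^{-1}(t)/t$, then repairing the $\simeq$ gap via a concrete equivalent growth function. The Dini hypothesis on $\Phi_2$ does not enter the argument directly; it is present because it guarantees $\Psi_2 \in \Delta_2$ and hence that $\Psi_2$ is a growth function on which the identities (\ref{relationfunctionandcomplementary}) and the $\mathscr{U}^q$-style upper-type-one bounds on $\Psi_2^{-1}$ are on firm ground.
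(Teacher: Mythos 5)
Your proof is correct and takes essentially the same route as the paper's: your equivalent representative $\tilde\Phi^{-1}(t)=t\,\Phi_1^{-1}(t)/\Phi_2^{-1}(t)$ is exactly the auxiliary function $\phi^{-1}$ the paper introduces, the substitution $t=\Phi_1(s)$ is the paper's monotonicity step, and the upper-type-$2$ estimate from $\Phi_1^{-1}(t)/t$ and $\Psi_2^{-1}(t)/t$ being non-increasing is identical (your version even corrects a small typo in the paper's display). Your closing remark that the Dini hypothesis only enters indirectly, to keep $\Psi_2$ a genuine growth function, is consistent with the paper, whose proof likewise never invokes it explicitly.
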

\begin{proof}
Using (\ref{relationfunctionandcomplementary}) and Proposition \ref{phiandinverse}, it is enough to prove that $$\phi^{-1}(t)=\frac{t\Phi_1^{-1}(t)}{\Phi_2^{-1}(t)}\; \textrm{is in }\; \mathscr{U}^{1/p}.$$
We have $\phi^{-1}(\Phi_1(t))=\frac{t\Phi_1(t)}{\Phi_2^{-1}\circ\Phi_1(t)}$, so that $\phi^{-1}$ is a growth function such that  $\frac{\phi^{-1}(t)}{t}$ is non-decreasing. It is left to prove that $\phi^{-1}$ is of upper type.

Let $s\geq 1$ and $t>0$. Using  the fact   $\frac{\Psi_2^{-1}(t)}{t}$ and   $\frac{\Phi_1^{-1}(t)}{t}$ are non-increasing, we obtain
\begin{eqnarray*}
\phi^{-1}(st)&\simeq &\Phi^{-1}_1(st)\Psi_2^{-1}(st)\\
&\lesssim& s \Phi^{-1}_1(t)s \Psi_2^{-1}(s)\\
&\lesssim &s^{2}\phi^{-1}(t).
\end{eqnarray*}
Hence $\phi^{-1}\in \mathscr{U}^{2}$. The proof is complete.
\end{proof}

\subsection{Proof of Theorem \ref{maintheorem1} and Theorem \ref{maintheorem2}}
We recall the following definition of a molecule (see \cite{BG} and the references therein).
\begin{defn}
A holomorphic function $A\in \mathcal H^2(\mathbb B^n)$ is called a molecule of order $L$, associated to the ball $B:=B(z_0,r_0)\subset \mathbb S^n$, if it satisfies
\begin{equation}\label{molecule}
||A||_{mol(B,L)}:=\left(\sup_{r<1}\int_{\mathbb S^n}\left(1+\frac{d(z_0,r\xi)^{L+n}}{r_0^{L+n}}\right)|A(r\xi)|^2\frac{d\sigma(\xi)}{\sigma(B)}\right)^{1/2}<\infty.
\end{equation}
\end{defn}
We have used the notation $d(z,w):=|1-z\overline{w}|$ for $z,w\in\overline{\mathbb B^n}$.

It is proved in \cite{BG} that for $\Phi\in \mathscr{L}_p$, every molecule $A$ of order $L$ so that $L>L_p:=2n(1/p -1)$ belongs to $\mathcal H^{\Phi}(\mathbb B^n)$ with
\begin{equation}\label{moleculeinHardy-orlicz}
||A||_{\mathcal H^{\Phi}}\lesssim \Phi(||A||_{mol(B,L)})\sigma(B).
\end{equation}
The following molecular decomposition for functions in some Hardy-Orlicz spaces is proved in \cite{BG}.
\begin{thm}
Let $\Phi\in \mathscr{L}_p$. For any $f\in \mathcal H^{\Phi}(\mathbb B^n)$, there exist molecules $A_j$ of order $L>L_p$, associated to the balls $B_j$, so that $f$ may be written as
$$f=\sum_j A_j$$
with  $||f||_{\mathcal H^{\Phi}}\simeq \sum_j \Phi(||A_j||_{mol(B_j,L)})\sigma(B_j).$
\end{thm}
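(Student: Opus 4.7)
The plan follows the Coifman--Rochberg--Weiss template adapted to the Hardy--Orlicz setting, via a tent-space atomic decomposition glued back by a Bergman reproducing formula. One direction of the norm equivalence is already the content of (\ref{moleculeinHardy-orlicz}), so only the existence of the decomposition with the matching upper bound has to be produced. The first ingredient is an area-function characterization: for $\Phi\in\mathscr L_p$, concavity of $\Phi$ together with the fact that $\Phi(t)/t$ is non-increasing and with the $\Delta_2$ condition yields $\|f\|_{\mathcal H^\Phi}\simeq \int_{\mathbb S^n}\Phi(S_\alpha(f))d\sigma$, where $S_\alpha(f)$ is the admissible area integral of the radial derivative $\sR f$; this is a good-$\la$ style argument that realizes $\mathcal H^\Phi$ as an Orlicz tent space.

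The second ingredient is a Whitney-type decomposition of the level sets of $S_\alpha(f)$. For each $k\in\mathbb Z$, cover the open set $O_k=\{\xi\in\mathbb S^n:S_\alpha(f)(\xi)>2^k\}$ by boundary balls $B_{k,j}=B_{\da_{k,j}}(\xi_{k,j})$ of bounded overlap, and let $T_{k,j}\subset\mathbb B^n$ denote the associated solid tents; set $E_{k,j}=T_{k,j}\setminus\bigcup_{\ell}T_{k+1,\ell}$, so that the $E_{k,j}$ are pairwise disjoint and exhaust $\mathbb B^n$. Using the Bergman reproducing formula, valid for $N$ sufficiently large,
\Bes
f(z)=c_N\int_{\mathbb B^n}\frac{(1-|w|^2)^{N}\,\sR f(w)}{(1-\langle z,w\rangle)^{n+N}}\,d\nu(w),
\Ees
one defines the candidate molecules by integrating the kernel over the $E_{k,j}$:
\Bes
A_{k,j}(z):=c_N\int_{E_{k,j}}\frac{(1-|w|^2)^{N}\,\sR f(w)}{(1-\langle z,w\rangle)^{n+N}}\,d\nu(w),
\Ees
so that $f=\sum_{k,j}A_{k,j}$ holomorphically. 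The freedom to take $N$ arbitrarily large is precisely what allows the molecular order $L=N-n$ to be made arbitrarily large, the feature the authors emphasize as crucial later.

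To verify that each $A_{k,j}$ is a molecule of order $L$ attached to $B_{k,j}$ with $\|A_{k,j}\|_{mol(B_{k,j},L)}\lesssim 2^k$, one bounds the diagonal $L^2$-mass over $B_{k,j}$ via the tent-space Carleson embedding applied to the bounded function $(1-|w|^2)^{N}\sR f$ on $E_{k,j}$, which is essentially of size $2^k$ by construction of the Whitney levels. For $r\xi$ away from $B_{k,j}$, the geometric estimate $|1-\langle r\xi,w\rangle|\gtrsim d(\xi_{k,j},r\xi)$ uniformly in $w\in T_{k,j}$ lets one extract from the kernel the decay factor $(\da_{k,j}/d(\xi_{k,j},r\xi))^{L+n}$, provided $N>L+n$. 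Combining these, a layer-cake argument gives
\Bes
\sum_{k,j}\Phi(\|A_{k,j}\|_{mol(B_{k,j},L)})\sigma(B_{k,j})\lesssim\sum_k\Phi(2^k)\sigma(O_k)\simeq\int_{\mathbb S^n}\Phi(S_\alpha(f))d\sigma\simeq\|f\|_{\mathcal H^\Phi},
\Ees
where the middle equivalence uses the $\Delta_2$ condition. The main obstacle is exactly this off-diagonal estimate: calibrating $N$ against the target order $L$ and establishing the decay on $\mathbb S^n$ with uniform constants through a careful geometric analysis of the pseudo-distance $d$; the final layer-cake summation is then essentially bookkeeping that rests on $\Phi\in\mathscr L_p$.
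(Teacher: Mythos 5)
Your argument is essentially correct, but it is a genuinely different route from the one the paper relies on: the paper does not prove this theorem itself, it quotes it from Bonami--Grellier \cite{BG}, and the proof there (which the paper reproduces and generalizes in Section \ref{section2} for $m$-molecules) goes through an \emph{atomic} decomposition first --- one writes $f=P\left(\sum_j a_j\right)$ with $a_j$ supported in balls $B_j$ and having vanishing moments against $\mathcal P_N(B_j)$, with $\sum_j\sigma(B_j)\Phi(\|a_j\|_2\sigma(B_j)^{-1/2})\simeq\|f\|_{\mathcal H^\Phi}$, and then invokes a kernel estimate (the analogue of Proposition \ref{projectionofmatom}) showing that the Szeg\"o projection of an atom of order $N$ is a molecule of any order $L$ controlled by $N$. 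You instead bypass atoms entirely and build the molecules directly as pieces of a Bergman-type reproducing integral over a Whitney decomposition of the level sets of the area integral, in the Coifman--Meyer--Stein tent-space style. Both work; the trade-off is which real-variable characterization of $\mathcal H^\Phi$ you take as input (the admissible maximal function for the atomic route, the area integral for yours), and in both the arbitrarily large molecular order comes from a tunable parameter (number of vanishing moments of the atoms, versus the exponent $N$ in your reproducing kernel). The paper's route has the advantage of localizing all the Orlicz bookkeeping in the atomic step and of meshing directly with the duality $(\mathcal H^\Phi)^*=BMOA(\rho)$ and with the $L^m$ generalization it needs later. Two points in your sketch deserve explicit care rather than being waved at: the constant term $f(0)$ is annihilated by $\sR$ and must be added back as a trivial molecule (so the equivalence should read $\Phi(|f(0)|)+\int_{\mathbb S^n}\Phi(S_\alpha(f))\,d\sigma$), and the diagonal bound $\|A_{k,j}\|_{\mathcal H^2}^2\lesssim 2^{2k}\sigma(B_{k,j})$ requires the tent-space duality argument in which the decisive fact is that $E_{k,j}$ avoids the tents over $O_{k+1}$, so the truncated area integral of $f$ is at most $2^{k+1}$ on a fixed proportion of $B_{k,j}$; as stated, your appeal to ``the bounded function $(1-|w|^2)^N\sR f$ being of size $2^k$'' is not the right formulation, since it is an averaged, not pointwise, quantity that is controlled.
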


We have  the following generalization of \cite[Lemma 3.9]{LLQR}.
\begin{lem}\label{Orlicznormofboundedfunction}
Let $(\Omega, \mathbb P)$ be a probability space, $\Phi$ a convex function such that $\Phi(0)=0$, and $0<p<\infty$. Then for every $g\in L^\infty (\Omega)$,
$$||g||^{lux}_{\mathcal H^{\Phi^p}}\le \frac{||g||_{\infty}}{\Phi^{-1}(||g||_{\infty}/||g||_{L^p})}.$$
\end{lem}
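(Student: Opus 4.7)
The plan is to prove the bound directly from the definition of the Luxemburg norm: set $\lambda_0 := \|g\|_\infty / \Phi^{-1}(\|g\|_\infty / \|g\|_{L^p})$ and verify that
\[
\int_\Omega \Phi\!\left(\frac{|g(\omega)|}{\lambda_0}\right)^{\!p} d\mathbb{P}(\omega)\le 1,
\]
which by definition gives $\|g\|^{lux}_{L^{\Phi^p}}\le \lambda_0$. So the whole lemma reduces to a pointwise estimate for the integrand.

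The pointwise estimate is driven by a single observation: if $\Phi$ is convex and $\Phi(0)=0$, then $t\mapsto \Phi(t)/t$ is non-decreasing on $(0,\infty)$. Indeed, for $0<a<b$, writing $a=(a/b)b+(1-a/b)\cdot 0$ and using convexity yields $\Phi(a)\le (a/b)\Phi(b)$. I would apply this monotonicity to $t=|g|/\lambda_0$, which is dominated pointwise by $\|g\|_\infty/\lambda_0=\Phi^{-1}(\|g\|_\infty/\|g\|_{L^p})$. The resulting inequality, after clearing denominators and using the explicit value of $\lambda_0$, collapses to
\[
\Phi(|g|/\lambda_0)\le \frac{|g|}{\|g\|_\infty}\,\Phi(\|g\|_\infty/\lambda_0)=\frac{|g|}{\|g\|_{L^p}},
\]
since $\Phi(\|g\|_\infty/\lambda_0)=\|g\|_\infty/\|g\|_{L^p}$ by the very choice of $\lambda_0$.

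To conclude, I would raise both sides to the $p$-th power and integrate against $\mathbb{P}$:
\[
\int_\Omega \Phi(|g|/\lambda_0)^p\,d\mathbb{P}\le \frac{1}{\|g\|_{L^p}^p}\int_\Omega |g|^p\,d\mathbb{P}=1,
\]
which is exactly the defining inequality for the Luxemburg norm at the level $\lambda_0$. Hence $\|g\|^{lux}_{L^{\Phi^p}}\le \lambda_0$, the desired bound.

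The argument is genuinely elementary; there is no real obstacle. The only point requiring a moment of thought is the monotonicity of $\Phi(t)/t$, which is the reason the hypotheses $\Phi$ convex and $\Phi(0)=0$ both appear. Note also that the assumption $\mathbb{P}(\Omega)=1$ (probability space) is not actually needed for this step, but it is consistent with how this lemma will be applied on $\mathbb{S}^n$ with normalized surface measure.
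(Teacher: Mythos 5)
Your proof is correct and is essentially the paper's own argument: both rest on the convexity inequality $\Phi(a)\le (a/b)\Phi(b)$ for $0<a\le b$ (equivalently, monotonicity of $\Phi(t)/t$), applied pointwise with $b=\|g\|_\infty/\lambda$, then raised to the $p$-th power and integrated. The only cosmetic difference is that the paper first normalizes $\|g\|_\infty=1$ and then identifies the admissible constants $C$, whereas you plug in the optimal $\lambda_0$ directly.
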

\begin{proof}
The proof follows exactly as in \cite{LLQR}. We give it here for completeness. We may assume that $||g||_\infty=1$. Since $\Phi$ is convex with $\Phi(0)=0$ and $|g|\le 1$,
we obtain for every $C>0$, $$\int_{\Omega}\Phi^p(\frac{|g|}{C})d\mathbb P\le \int_{\Omega}|g|^p\Phi^p(\frac{1}{C})d\mathbb P=||g||_{L^p}^p\Phi^p(\frac{1}{C}).$$
Now one sees that $||g||_{L^p}^p\Phi^p(\frac{1}{C})\le 1$ if and only if $C\ge 1/\Phi^{-1}(1/||g||_{L^p})$, and from this follows the proof of the lemma.
\end{proof}

The following is a direct consequence of the above lemma.
\begin{lem}\label{anorliczfunction2}
Suppose that $\Phi\in \mathscr{U}^q$ and let $\Psi$ be its complementary. For  each $a\in\mathbb B^n$ and $0<p<\infty$, let
$$g(z)=\Psi^{-1}\left(\frac{1}{(1-|a|)^{n/p}}\right)\left(\frac{1-|a|}{1-\langle z,a\rangle}\right)^{\frac{2n}{p}}.$$ We have $||g||^{lux}_{\mathcal H^{\Psi^p}}\lesssim 1.$
\end{lem}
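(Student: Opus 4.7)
The plan is to apply Lemma \ref{Orlicznormofboundedfunction} with the convex function $\Psi$ in place of $\Phi$, on the probability space $\mathbb{S}^n$. The setup is legitimate: as the complementary of a growth function, $\Psi$ is convex with $\Psi(0)=0$, and the boundary values of $g$ lie in $L^{\infty}(\mathbb{S}^n)$ because $|1-\langle\xi,a\rangle|\ge 1-|a|$ for $\xi\in\mathbb{S}^n$. Since moreover $g$ is bounded and holomorphic on $\mathbb{B}^n$, its Hardy--Orlicz Luxemburg norm coincides with the $L^{\Psi^p}(\mathbb{S}^n)$-Luxemburg norm of its boundary values, so a bound on the latter will do.

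Next I would extract the two inputs required by the lemma. Evaluating at $\xi=a/|a|$, where $1-\langle\xi,a\rangle=1-|a|$, gives
$$||g||_{\infty}\simeq \Psi^{-1}\!\left(\frac{1}{(1-|a|)^{n/p}}\right).$$
The classical Forelli--Rudin estimate $\int_{\mathbb{S}^n}|1-\langle\xi,a\rangle|^{-2n}d\sigma(\xi)\simeq (1-|a|)^{-n}$ then yields
$$||g||_{L^p(\mathbb{S}^n)}\simeq \Psi^{-1}\!\left(\frac{1}{(1-|a|)^{n/p}}\right)(1-|a|)^{n/p},$$
so that $||g||_{\infty}/||g||_{L^p}\simeq (1-|a|)^{-n/p}$. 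Substituting into Lemma \ref{Orlicznormofboundedfunction} produces
$$||g||^{lux}_{\mathcal H^{\Psi^p}}\lesssim \frac{\Psi^{-1}\!\bigl(1/(1-|a|)^{n/p}\bigr)}{\Psi^{-1}\!\bigl(C/(1-|a|)^{n/p}\bigr)}$$
for some absolute constant $C>0$ absorbing the implicit comparison constants.

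The final step, which I expect to be the main (and really the only) obstacle, is to show that this last ratio is bounded uniformly in $a\in\mathbb{B}^n$; this reduces to the comparison $\Psi^{-1}(Ct)\lesssim \Psi^{-1}(t)$. Since $\Psi$ is convex with $\Psi(0)=0$, the map $t\mapsto \Psi(t)/t$ is non-decreasing, which forces $\Psi(st)\le s\Psi(t)$ for $0<s\le 1$; in other words, $\Psi$ is automatically of lower type $1$. The type-correspondence half of the proof of Proposition \ref{phiandinverse} then shows that $\Psi^{-1}$ is of upper type $1$, hence $\Psi^{-1}(Ct)\lesssim \Psi^{-1}(t)$ for $C\ge 1$, which completes the argument. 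What makes this delicate is that $\Psi$ need not satisfy the $\Delta_2$-condition, so one cannot iterate a doubling step on $\Psi^{-1}$ directly; one must instead invoke the upper-type-$1$ bound for $\Psi^{-1}$ coming from the lower-type-$1$ property of $\Psi$.
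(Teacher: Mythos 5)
Your proof is correct and takes essentially the same route as the paper, which presents the lemma as a ``direct consequence'' of Lemma \ref{Orlicznormofboundedfunction}: you apply that lemma to $\Psi$ on $(\mathbb S^n,\sigma)$, compute $\|g\|_\infty$ and $\|g\|_{L^p}$ via the Forelli--Rudin estimate, and control the resulting ratio using the fact that $t\mapsto\Psi^{-1}(t)/t$ is non-increasing (equivalently, $\Psi^{-1}$ is of upper type $1$), which is exactly the intended argument.
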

We now describe a factorization of each molecule. This is the main ingredient in the proof of our  results in this section.
\begin{thm}\label{factorization}
Let $\Phi_1\in \mathscr{L}_p$ and $\Phi_2\in \mathscr{U}^q$, $\rho_i(t)=\frac{1}{t\Phi_i^{-1}(1/t)}$ and $\Psi_2$ the complementary of $\Phi_2.$ Let $\Phi$ be such that $\rho(t)=\rho_\Phi(t):=\frac{\rho_1}{\rho_2}.$ We assume moreover that $\Phi_2$ satisfies the Dini condition (\ref{dinicondition}). Then a molecule $A$ associated to the ball $B$ may be written as $fg$, where $f$ is  a molecule and $g\in \mathcal H^{\Psi_2}(\mathbb B^n)$. Moreover, for $L, L^\prime$ given with $L^\prime + 4n \leq L,$  $f$ and $g$ may be chosen such that
\begin{equation}\label{estimate2}
||g||_{\mathcal H^{\Psi_2}}\lesssim 1, \,\,\,\textrm{and}\,\,\, ||f||_{mol(B,L^\prime)}\lesssim \frac{||A||_{mol(B,L)}}{\rho_2(\sigma(B))},
\end{equation}
or such that
\begin{equation}\label{estimate3}
||g||_{\mathcal H^{\Psi_2}}\lesssim 1, \,\,\,\textrm{and}\,\,\,  ||f||^{lux}_{\mathcal H^{\Phi_1}}\lesssim ||A||_{mol(B,L)}\sigma(B)\rho(\sigma(B)).
\end{equation}
\end{thm}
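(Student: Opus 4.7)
The plan is to take $g$ to be an explicit normalized power of the Szeg\"o kernel centered inside $B=B(z_0,r_0)$, and to set $f:=A/g$. Concretely, with $a:=(1-r_0)z_0\in\mathbb B^n$ (so $1-|a|=r_0\asymp\sigma(B)^{1/n}$), I would apply Lemma \ref{anorliczfunction2} to $\Phi_2$ with $p=1$ to produce
$$g(z):=\Psi_2^{-1}\!\left(\frac{1}{r_0^n}\right)\left(\frac{r_0}{1-\langle z,a\rangle}\right)^{\!2n},\qquad \|g\|^{lux}_{\mathcal H^{\Psi_2}}\lesssim 1.$$
By (\ref{relationcomplementaryandrho}) the leading constant is comparable to $\rho_2(\sigma(B))$, and since $|1-\langle z,a\rangle|\ge r_0>0$ on $\overline{\mathbb B}^n$, the function $g$ is zero-free on $\overline{\mathbb B}^n$, so $f:=A/g$ is a well-defined element of $\mathcal H(\mathbb B^n)$.

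The core of the argument is the molecular-norm bound (\ref{estimate2}). Using $1-r\le d(z_0,r\xi)$ together with the triangle inequality, one checks that $|1-\langle r\xi,a\rangle|\lesssim r_0+d(z_0,r\xi)$ uniformly for $r<1$ and $\xi\in\mathbb S^n$, whence
$$\frac{1}{|g(r\xi)|}\lesssim \frac{1}{\rho_2(\sigma(B))}\left(1+\frac{d(z_0,r\xi)}{r_0}\right)^{\!2n}.$$
Plugging $f=A/g$ into the definition of $\|\cdot\|_{mol(B,L')}$ and invoking the elementary bound $\bigl(1+(d/r_0)^{L'+n}\bigr)\bigl(1+d/r_0\bigr)^{4n}\lesssim 1+(d/r_0)^{(L'+4n)+n}$ together with the monotonicity of $\|A\|_{mol(B,\cdot)}$ in the order and the hypothesis $L'+4n\le L$, one obtains
$$\|f\|^2_{mol(B,L')}\lesssim \rho_2(\sigma(B))^{-2}\|A\|^2_{mol(B,L'+4n)}\le \rho_2(\sigma(B))^{-2}\|A\|^2_{mol(B,L)},$$
which is (\ref{estimate2}). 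The choice $p=1$ in Lemma \ref{anorliczfunction2} is dictated by the wish to have $g\in\mathcal H^{\Psi_2}$ directly, and it fixes the exponent $2n$ of the kernel factor, which in turn accounts for the $4n$ gap between $L$ and $L'$.

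For (\ref{estimate3}) I would convert (\ref{estimate2}) into a Luxemburg-norm estimate via (\ref{moleculeinHardy-orlicz}). Setting $c:=\|f\|_{mol(B,L')}$ and $\lambda:=c/\Phi_1^{-1}(1/\sigma(B))=c\,\sigma(B)\,\rho_1(\sigma(B))$, the rescaled molecule $f/\lambda$ satisfies $\|f/\lambda\|_{mol(B,L')}=\Phi_1^{-1}(1/\sigma(B))$, so (\ref{moleculeinHardy-orlicz}) yields $\int_{\mathbb S^n}\Phi_1(|f|/\lambda)\,d\sigma\lesssim 1$. Since $\Phi_1\in\mathscr{L}_p$ is of upper type $1$ and therefore $\Delta_2$, inflating $\lambda$ by a fixed universal factor brings the modular below $1$, giving $\|f\|^{lux}_{\mathcal H^{\Phi_1}}\lesssim c\,\sigma(B)\,\rho_1(\sigma(B))$; plugging in (\ref{estimate2}) and using $\rho=\rho_1/\rho_2$ then produces (\ref{estimate3}). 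I expect the main obstacle to be the careful bookkeeping in the second step---verifying the uniform bound $|1-\langle r\xi,a\rangle|\lesssim r_0+d(z_0,r\xi)$ for all $r<1$ and tracking the precise normalization constant in $g$ through (\ref{relationcomplementaryandrho}); the modular-to-Luxemburg conversion in the third step is routine given the $\Delta_2$-property of $\Phi_1$.
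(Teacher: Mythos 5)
Your proposal is correct and follows essentially the same route as the paper: the same choice of $g$ as a normalized power of the Szeg\"o kernel at $a=(1-r_0)z_0$, the same appeal to Lemma \ref{anorliczfunction2} and (\ref{relationcomplementaryandrho}) for $\|g\|_{\mathcal H^{\Psi_2}}\lesssim 1$ and the lower bound $|g(z)|\gtrsim \rho_2(\sigma(B))\,r_0^{2n}/(d(z,z_0)+r_0)^{2n}$, after which the paper defers to the analogue in \cite{BS}. The details you supply for (\ref{estimate2}) and the homogeneity/normalization argument deriving (\ref{estimate3}) from (\ref{estimate2}) via (\ref{moleculeinHardy-orlicz}) coincide with what the paper carries out explicitly in the proof of Theorem \ref{factorization2gene}.
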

\begin{proof}
Suppose $B:=B(z_0,r)\subset \mathbb S^n,$ with $z_0\in \mathbb S^n$ and $r<1$. Let $a=(1-r)z_0$ and take
$$g(z)=\Psi_2^{-1}\left(\frac{1}{(1-|a|)^{n}}\right)\left(\frac{1-|a|}{1-\langle z,a\rangle}\right)^{2n}.$$ By Lemma \ref{anorliczfunction2}, we have $||g||_{\mathcal H^{\Psi_2}} \lesssim 1$, and using (\ref{relationcomplementaryandrho}) we also have that
\begin{equation}\label{estimate4}
|g(z)|\simeq \rho_2(\sigma(B)) \frac{r^{2n}}{|1-z\overline{a}|^{2n}}.
\end{equation}
We know that $|1-z\overline{a}|^{2n}\lesssim (d(z,z_0)+r)^{2n}$. Hence
$$|g(z)|\gtrsim \rho_2(\sigma(B))\frac{r^{2n}}{ (d(z,z_0)+r)^{2n}}.$$
Having this and under the condition on $L$ and $L^\prime$, the rest of the proof follows as in the proof of the analogue result in  \cite[Theorem 4.3]{BS}, we omit these details.
\end{proof}

Having Theorem \ref{factorization} and the techniques in \cite{BS}, the proof of Theorem \ref{maintheorem1} and Theorem \ref{maintheorem2} is now routine. Indeed, the sufficient part of Theorem \ref{maintheorem1} is an application of Proposition \ref{volbergtolokonnikov} and the sufficiency in Theorem \ref{maintheorem2} follows from   Theorem \ref{duality2}, Theorem \ref{duality1} and Proposition \ref{volbergtolokonnikov}. The necessity part follows as in the proof of Theorem 1.8 and 1.9 in \cite{BS} using Theorem \ref{factorization} instead of  Theorem 4.3 in \cite{BS}. We omit the details. This ends the proof of Theorem \ref{maintheorem1} and Theorem \ref{maintheorem2}.

\section{Boundedness of $h_b$: $\mathcal H^{\Phi_1}(\mathbb
 B^n)\rightarrow \mathcal H^{\Phi_2}(\mathbb
 B^n)$;  $\Phi_i\in \mathscr{U}^q,\; i=1,2.$}\label{section2}
This section is devoted to the study of the boundedness of the Hankel operator $h_b$, between  two Hardy-Orlicz spaces $\mathcal H^{\Phi_1}(\mathbb
 B^n)$ and $\mathcal H^{\Phi_2}(\mathbb
 B^n)$ with $\Phi_i \in \mathscr{U}^q;$ $i=1,2.$ The main tools we need are in \cite{BG} where atomic and molecular decomposition for functions in $\mathcal H^{\Phi}(\mathbb B^n)$ with $\Phi\in \mathscr{L}_p$ are described.  But since we are dealing here with convex functions, we will need to consider some simple generalizations of those results in order to get rid of the present difficulty. These extensions are explained in the next subsection and in many cases the proofs just follow the lines of \cite{BG} where we will refer for further details.
\subsection{Generalization of atomic and molecular decomposition }
For $\Phi\in \mathscr{L}_p$, the atomic decomposition and molecular decomposition for $\mathcal H^{\Phi}(\mathbb B^n)$ are described using square integrable functions \cite{BG}. One classical result used  in various arguments is the fact that the Szeg\"o projection is bounded on $L^2(\mathbb S^n)$. It is well known that, for all $1<m<\infty,$ the Szeg\"o projection is bounded from $L^m(\mathbb S^n)$ to $\mathcal H^{m}(\mathbb B^n)$. This fact allows us to obtain atomic and molecular decompositions for $\mathcal H^{\Phi}(\mathbb B^n)$ using $m-$integrable functions.

We will now give a precise description of what we are talking about. In the sequel, $m>1$ will be a fixed real.
\begin{defn}\label{matom}
A function a in $L^m(\mathbb S^n)$ is called an $m-$atom of order $N\in \mathbb N$ associated to the ball $B:=B(z_0,r_0)$, for some $z_0\in \mathbb S^n$, if $supp\; a\subset B$ and when $r_0<\delta, $
$$\int_{\mathbb S^n}a(\xi)P(\xi)d\sigma(\xi)=0 \;\textrm{for every}\; P\in \mathcal{P}_N(z_0).$$
\end{defn}
We obtain the following atomic decomposition.
\begin{thm}
Let $N\in\mathbb N$ be larger than $N_{p,m}:= mn(1/p-1)-1.$ Given any $f\in \mathcal H^{\Phi}(\mathbb B^n)$, there exist $m-$atoms $a_j$ of order $N$ such that (in the distribution sense)
$$f=P\left(\sum_{j=0}^\infty a_j\right)=\sum_{j=0}^\infty P(a_j).$$
Moreover,
$$\sum_{j=0}^\infty \sigma(B_j)\Phi(||a_j||_m\sigma(B_j)^{-1/m})\simeq ||f||_{\mathcal H^{\Phi}(\mathbb B^n)}.$$
\end{thm}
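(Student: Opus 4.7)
The plan is to retrace the proof of the $2$-atomic decomposition established in \cite{BG}, with every invocation of $L^2$-theory replaced systematically by its $L^m$ counterpart. The only extra ingredient needed to carry this out is the classical fact that the Szegő projection $P$ is bounded from $L^m(\mathbb{S}^n)$ to $\mathcal{H}^m(\mathbb{B}^n)$ for all $1 < m < \infty$, which takes over the role played by the $L^2$-boundedness of $P$ throughout \cite{BG}.

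Concretely, I would first invoke the maximal characterization $\|f\|_{\mathcal{H}^\Phi} \simeq \int_{\mathbb{S}^n} \Phi(Mf)\, d\sigma$ (valid for $\Phi \in \mathscr{L}_p$) and perform a Calderón--Zygmund stopping-time decomposition at dyadic levels $\lambda_k = 2^k$. The level sets $\Omega_k = \{Mf > 2^k\}$ are covered by a Whitney family of balls $\{B_{k,j}\}$, and on each $B_{k,j}$ one constructs a local piece $a_{k,j}$ supported in a fixed enlargement $cB_{k,j}$ by subtracting the $L^m$-projection onto $\mathcal{P}_N(z_{k,j})$, where $z_{k,j}$ is the center of $B_{k,j}$. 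This produces both the vanishing-moment condition of Definition \ref{matom} and the size bound $\|a_{k,j}\|_m \lesssim 2^k \sigma(B_{k,j})^{1/m}$. The distributional identity $f = \sum_{k,j} P(a_{k,j})$ then follows by telescoping across levels and appealing to the $L^m$-boundedness of $P$ on each finite partial sum, while the norm equivalence reduces to
\[
\sum_{k,j} \sigma(B_{k,j})\, \Phi\!\left(\frac{\|a_{k,j}\|_m}{\sigma(B_{k,j})^{1/m}}\right) \lesssim \sum_k \Phi(2^k)\, \sigma(\Omega_k) \simeq \int_{\mathbb{S}^n} \Phi(Mf)\, d\sigma \simeq \|f\|_{\mathcal{H}^\Phi},
\]
with the converse obtained by recognizing each $P(a_{k,j})$ as a molecule of admissible order and invoking (\ref{moleculeinHardy-orlicz}).

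The main obstacle will be verifying that the threshold $N > N_{p,m} = mn(1/p-1) - 1$ is precisely what is required so that $P(a_{k,j})$ qualifies as a molecule of order $L > L_p$ in the sense of (\ref{molecule}). The pointwise decay of $P(a_{k,j})$ away from $B_{k,j}$, obtained by Taylor-expanding the Szegő kernel and exploiting the vanishing moments, must overwhelm the new $L^m$-normalization of $a_{k,j}$ rather than the original $L^2$-one, and this is exactly what shifts the critical order from $N_{p,2}$ to $N_{p,m}$; the shift is visible in the Hölder estimate bounding $|P(a_{k,j})(z)| = \left|\int a_{k,j}(\xi)\, R_N(z,\xi)\, d\sigma(\xi)\right|$, where $R_N$ is the order-$N$ Taylor remainder of the Szegő kernel $S(z,\cdot)$ at $z_{k,j}$.
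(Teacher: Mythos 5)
Your proposal matches the paper's intent exactly: the paper offers no written proof of this theorem, stating only that the extensions ``just follow the lines of \cite{BG}'' once the $L^2$-boundedness of the Szeg\"o projection is replaced by its boundedness from $L^m(\mathbb S^n)$ to $\mathcal H^m(\mathbb B^n)$, which is precisely the substitution your Calder\'on--Zygmund/Whitney reconstruction carries out. Your sketch is a faithful (indeed more detailed) rendering of that same argument, including the correct identification of where the $L^m$-normalization shifts the moment threshold to $N_{p,m}$.
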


\begin{defn}
A holomorphic function $A\in \mathcal H^m(\mathbb B^n)$ is called a $m-$molecule of order $L$, associated to the ball $B:=B(z_0,r_0)\subset \mathbb S^n$, if it satisfies
\begin{equation}\label{molecule2}
||A||_{mol(B,L,m)}:=\left(\sup_{r<1}\int_{\mathbb S^n}\left(1+\frac{d(z_0,r\xi)^{L+n}}{r_0^{L+n}}\right)|A(r\xi)|^m\frac{d\sigma(\xi)}{\sigma(B)}\right)^{1/m}<\infty.
\end{equation}
\end{defn}
The following proposition replaces Proposition $1.9$ in \cite{BG}. The proof is similar.
\begin{prop}\label{projectionofmatom}
For an $m-$atom a of order $N$ associated to a ball $B\subset \mathbb S^n$, its Szeg\"o projection $P(a)$ is a $m-$molecule associated to the ball $\tilde B$ of double radius, of any order $L< (m-2)n + \frac{N+1}{2}m$. It satisfies
\begin{equation}\label{projectionofatom}
||A||_{mol(\tilde B,L,m)} \lesssim ||a||_m\sigma(B)^{-1/m}.
\end{equation}
\end{prop}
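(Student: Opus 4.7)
The plan is to bound
$$I := \sup_{r<1}\int_{\mathbb S^n}\Bigl(1+\frac{d(z_0,r\xi)^{L+n}}{r_0^{L+n}}\Bigr)|P(a)(r\xi)|^m\,d\sigma(\xi)$$
by $C\|a\|_m^m$, since $\sigma(\tilde B)\simeq \sigma(B)\simeq r_0^n$ and $\tilde B$ is centered at $z_0$ with radius $\simeq r_0$. I would split $\mathbb S^n$ into the \emph{near} region $\{\xi:d(z_0,r\xi)\le 2r_0\}$ and the \emph{far} region $\{\xi:d(z_0,r\xi)>2r_0\}$. On the near region the weight factor is bounded by a constant, and the $L^m$-boundedness of the Szeg\"o projection (valid for $m>1$) immediately yields $\int_{\mathrm{near}}|P(a)|^m\,d\sigma\lesssim \|P(a)\|_m^m\lesssim \|a\|_m^m$.

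On the far region I would exploit the cancellation of $a$. Since $\int a\cdot Q\,d\sigma=0$ for every $Q\in\mathcal P_N(z_0)$, one can write
$$P(a)(r\xi)=\int_B\bigl[S(r\xi,\eta)-T_N^{z_0}S(r\xi,\cdot)(\eta)\bigr]\,a(\eta)\,d\sigma(\eta),$$
where $T_N^{z_0}$ is the order-$N$ Taylor polynomial in the last $2n-1$ real coordinates adapted to $z_0$. For $\eta\in B(z_0,r_0)$ these coordinates have length $\lesssim r_0^{1/2}$, and combined with the standard derivative bound $|\partial_\eta^\alpha S(r\xi,\eta)|\lesssim d(z_0,r\xi)^{-n-|\alpha|/2}$ this gives the Taylor-remainder estimate
$$\bigl|S(r\xi,\eta)-T_N^{z_0}S(r\xi,\cdot)(\eta)\bigr|\lesssim \frac{r_0^{(N+1)/2}}{d(z_0,r\xi)^{n+(N+1)/2}}.$$
Applying H\"older's inequality, $\|a\|_1\le\|a\|_m\sigma(B)^{1-1/m}\simeq\|a\|_m r_0^{n(m-1)/m}$, then yields
$$|P(a)(r\xi)|^m\lesssim \|a\|_m^m\,r_0^{n(m-1)+m(N+1)/2}\,d(z_0,r\xi)^{-mn-m(N+1)/2}.$$

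A dyadic decomposition of $\{d(z_0,r\xi)>2r_0\}$ reduces the far-region integral to a geometric series whose ratio has exponent $n-s$, with $s=mn+\frac{m(N+1)}{2}-L-n$. Convergence requires $s>n$, that is $L<(m-2)n+\frac{N+1}{2}m$, which is exactly the sharp threshold in the statement; the remaining powers of $r_0$ cancel exactly, so the far-region integral is $\lesssim \|a\|_m^m$. Dividing by $\sigma(\tilde B)$ gives \eqref{projectionofatom}. The main obstacle is the anisotropic Taylor-remainder bound for the Szeg\"o kernel: the factor $\frac{1}{2}(N+1)$ (rather than $N+1$) in the threshold reflects the square-root scaling of the Kor\'anyi balls in the complex-tangential directions, so each derivative applied along the last $2n-1$ coordinates gains only a factor $r_0^{1/2}$. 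This is the only point where the argument genuinely differs from the $L^2$ case of \cite{BG}, but the bookkeeping of exponents is otherwise identical.
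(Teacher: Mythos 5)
Your argument is correct and is exactly the proof the paper has in mind: the paper only cites \cite{BG} (Proposition 1.9 there) and says the proof is similar, the sole change being the replacement of the $L^2$-boundedness of the Szeg\"o projection by its $L^m$-boundedness ($m>1$) on the near region, while the far region is handled via the cancellation of the atom and the anisotropic Taylor remainder of the Szeg\"o kernel, and your exponent bookkeeping yields precisely the threshold $L<(m-2)n+\frac{N+1}{2}m$. The only imprecision is that the coordinate along $\Im\xi_0$ has length $\lesssim r_0$ rather than $r_0^{1/2}$ and its derivatives cost $d(z_0,r\xi)^{-1}$ rather than $d(z_0,r\xi)^{-1/2}$, but since $r_0/d(z_0,r\xi)<1$ on the far region that direction gains at least as much as the complex-tangential ones, so your final remainder bound and the stated threshold are unaffected.
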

This yields the following molecular decomposition.
\begin{thm}\label{moleculardecomposition2}
Let $\Phi\in \mathscr{L}_p$. For any $f\in \mathcal H^{\Phi}(\mathbb B^n)$, there exist $m-$molecules $A_j$ of order $L>L_{p,m}:=m n(1/p-1)$, associated to the balls $B_j$, so that $f$ may be written as
$$f=\sum_j A_j$$
with  $||f||_{\mathcal H^{\Phi}}\simeq \sum_j \Phi(||A_j||_{mol(B_j,L,m)})\sigma(B_j).$
\end{thm}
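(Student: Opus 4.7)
The plan is to derive this decomposition by applying the Szegő projection termwise to the $m$-atomic decomposition of the preceding theorem, converting each $m$-atom into an $m$-molecule via Proposition \ref{projectionofmatom}. Given the target molecule order $L > L_{p,m} = mn(1/p-1)$, I would choose $N \in \mathbb{N}$ large enough that both $N > N_{p,m} = mn(1/p-1) - 1$ (so the atomic decomposition theorem applies) and $L < (m-2)n + \frac{N+1}{2}m$ (so that Proposition \ref{projectionofmatom} produces molecules of order at least $L$); such an $N$ exists because the second condition is linear in $N$ with positive slope.

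Next, given $f \in \mathcal H^\Phi(\mathbb B^n)$, the atomic decomposition would produce $m$-atoms $a_j$ of order $N$, supported in balls $B_j$, with $f = \sum_j P(a_j)$ and $\sum_j \sigma(B_j)\Phi(\|a_j\|_m \sigma(B_j)^{-1/m}) \simeq \|f\|_{\mathcal H^\Phi}$. Setting $A_j := P(a_j)$, Proposition \ref{projectionofmatom} identifies $A_j$ as an $m$-molecule of order $L$ associated to the doubled ball $\tilde B_j$, satisfying $\|A_j\|_{mol(\tilde B_j,L,m)} \lesssim \|a_j\|_m \sigma(B_j)^{-1/m}$. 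Since $\sigma(\tilde B_j) \simeq \sigma(B_j)$ and $\Phi$ obeys the $\Delta_2$ condition, one inequality of the norm equivalence is immediate:
\[
\sum_j \sigma(\tilde B_j)\Phi(\|A_j\|_{mol(\tilde B_j,L,m)}) \lesssim \sum_j \sigma(B_j)\Phi(\|a_j\|_m \sigma(B_j)^{-1/m}) \simeq \|f\|_{\mathcal H^\Phi}.
\]
The reverse inequality would follow from subadditivity of $\Phi$ (valid since $\Phi$ is concave with $\Phi(0)=0$) combined with the $m$-molecule analogue of (\ref{moleculeinHardy-orlicz}), namely $\|A\|_{\mathcal H^\Phi} \lesssim \sigma(B)\Phi(\|A\|_{mol(B,L,m)})$ for every $m$-molecule of order $L > L_{p,m}$.

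The main obstacle is establishing this last estimate in the $L^m$ framework. The proof in \cite{BG} is written for $m=2$, so I would verify that the same scheme carries over when $L^2$ is replaced by $L^m$: split the boundary sphere into the near region $\{d(z_0,\xi) \le 2r_0\}$ and its complement, then apply Hölder's inequality with exponent $m$ (rather than $2$) against the weight $1 + (d(z_0,\xi)/r_0)^{L+n}$ built into (\ref{molecule2}), bounding the far part through the tail decay $d(z_0,\cdot)^{-(L+n)}$. The threshold $L > mn(1/p-1)$ is precisely what forces convergence of the resulting dyadic geometric sum once the lower type $p$ estimate for $\Phi$ is invoked. Because this argument uses only the $L^m$ duality pairing and the tail control inherent to the molecule norm, only bookkeeping distinguishes it from the $m=2$ case in \cite{BG}.
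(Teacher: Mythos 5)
Your strategy---apply the Szeg\"o projection termwise to the $m$-atomic decomposition and use Proposition \ref{projectionofmatom} to convert each atom into an $m$-molecule, then close the norm equivalence with the $m$-molecule analogue of (\ref{moleculeinHardy-orlicz})---is exactly how the paper intends this theorem to be read: it is stated as an immediate consequence of the two preceding results, with the details deferred to \cite{BG}. So in outline your proof matches the paper's.

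One quantitative step does not close as you state it. For the reverse inequality you need $\|A\|_{\mathcal H^{\Phi}}\lesssim \Phi(\|A\|_{mol(B,L,m)})\sigma(B)$ and you assert that $L>L_{p,m}=mn(1/p-1)$ is ``precisely'' the convergence threshold for the dyadic sum. Running the argument you describe: on the annulus $E_k=\{2^k r_0\le d(z_0,\cdot)<2^{k+1}r_0\}$ the molecule norm gives $\int_{E_k}|A|^m\,d\sigma\lesssim 2^{-k(L+n)}\sigma(B)\|A\|_{mol(B,L,m)}^m$ while $\sigma(E_k)\simeq 2^{kn}\sigma(B)$, so Jensen (via concavity of $\Phi_m(t)=\Phi(t^{1/m})$, as in (\ref{jensen})) followed by the lower type $p$ estimate yields
\begin{equation*}
\int_{E_k}\Phi(|A|)\,d\sigma\;\lesssim\;2^{kn}\,2^{-k(L+2n)p/m}\,\Phi\bigl(\|A\|_{mol(B,L,m)}\bigr)\sigma(B),
\end{equation*}
and the resulting geometric series converges if and only if $L>\frac{mn}{p}-2n$. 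This coincides with $mn(1/p-1)$ only when $m=2$; for $m>2$ it is strictly larger, and it is in fact the threshold the paper itself imposes in the proposition that follows this theorem. The slip is harmless for the existence of the decomposition, since $N$, and hence the order of the molecules you produce, is at your disposal: choosing $L$ above $\frac{mn}{p}-2n$ repairs the argument. But as written, your verification of the key molecule estimate does not go through at the stated order $L_{p,m}$, so you should either prove the estimate by a sharper argument at that order or (as the paper implicitly does) work with molecules of sufficiently high order from the start.
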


Let $\rho$ be a weight, we  define a family of weighted BMOA    spaces $BMOA(\rho,m)$.
\Be\label{bmoaweight2}
BMOA(\rho,m):=\left\lbrace f\in \mathcal H^{m}(\mathbb B^n):\quad  \sup_{r<1} ||f_r||_{BMO(\rho,m)}<\infty\right\rbrace, \Ee
where
$$||f||_{BMO(\rho,m)}=\left(\sup_B\inf_{R\in \mathcal P_N(B) }\frac 1{(\varrho(\sigma(B)))^m\sigma(B)}\int_B|f -R|^m d\sigma\right)^{1/m}.$$

The work of Viviani \cite{V}  allows us to see that $BMOA(\rho,m)$ are all the same as $m\geq 1$. Hence the dual space $(\mathcal H^{\Phi}(\mathbb B^n))^*$ of $\mathcal H^{\Phi}(\mathbb B^n)$ can be identified with $BMOA(\rho,m),$ for each fixed $m$.

 The following proposition will be used to replace the inequality (\ref{moleculeinHardy-orlicz}) in the case of Orlicz function $\Phi\in \mathscr{U}^q.$

\begin{prop}
Let $\Phi\in \mathscr{U}^q$ and $0<p\le 1$. Then for $m$ large enough, any $m-$molecule A of order $L$ associated to a ball $B$, such that $L>\frac{nm}{p} -2n$, belongs to $\mathcal H^{\Phi^p}(\mathbb B^n)$ with
\begin{equation}\label{moleculeinHardy-orlicz3}
||A||_{\mathcal H^{\Phi^p}}\lesssim \Phi^p(||A||_{mol(B,L,m)})\sigma(B).
\end{equation}
\end{prop}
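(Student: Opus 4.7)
My plan is to follow the outline of the proof of (\ref{moleculeinHardy-orlicz}) in \cite{BG}, replacing $2$-molecules by the $m$-molecules introduced in this subsection and handling the composition $\Phi^p$ through the auxiliary function $\Phi_m(t):=\Phi(t^{1/m})$. Fix a molecule $A$ of order $L>nm/p-2n$ associated to the ball $B=B(z_0,r_0)\subset\mathbb{S}^n$ and write $M:=\|A\|_{mol(B,L,m)}$. I would first choose $m$ large enough so that $\Phi(t)/t^m$ is non-increasing, which is possible by Lemma~\ref{decreasing}. Using $\Phi'(t)\simeq\Phi(t)/t$, this forces $\Phi_m(t)/t$ to be non-increasing as well, and a routine manipulation (splitting the defining integral and using the lower type of $\Phi_m$) shows that $\Phi_m$ is equivalent, with constants depending on $m$, to the concave growth function $\widetilde{\Phi}_m(t):=\int_0^t\Phi_m(s)/s\,ds$. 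Since $0<p\le 1$, composition with $x\mapsto x^p$ preserves concavity, so $\Phi_m^{\,p}$ is equivalent to a concave function and Jensen's inequality applies to it at the cost of a multiplicative constant.

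Next I would decompose $\mathbb{S}^n=2B\cup\bigcup_{k\ge 1}(2^{k+1}B\setminus 2^kB)$, stopping once $2^kB$ covers $\mathbb{S}^n$, and denote the $k$-th piece by $E_k$; note that $\sigma(E_k)\simeq 2^{kn}\sigma(B)$. On $E_k$ one has $d(z_0,\cdot)\gtrsim 2^kr_0$, which combined with the $m$-molecule bound yields
\[
\frac{1}{\sigma(E_k)}\int_{E_k}|A(r\xi)|^m\,d\sigma(\xi)\lesssim M^m\cdot 2^{-k(L+2n)}.
\]
Applying Jensen to the concave equivalent of $\Phi_m^{\,p}$, and then using that $\Phi^p$ has lower type $p$ (inherited from the lower type $1$ of $\Phi\in\mathscr{U}^q$), one obtains
\[
\int_{E_k}\Phi^p(|A(r\xi)|)\,d\sigma\lesssim \sigma(E_k)\,\Phi^p\!\bigl(M\cdot 2^{-k(L+2n)/m}\bigr)\lesssim \sigma(B)\,\Phi^p(M)\cdot 2^{k(n-p(L+2n)/m)}.
\]
The hypothesis $L>nm/p-2n$ is exactly what makes $n-p(L+2n)/m<0$, so summing the resulting geometric series over $k\ge 0$ produces $\int_{\mathbb S^n}\Phi^p(|A(r\xi)|)\,d\sigma\lesssim \sigma(B)\Phi^p(M)$. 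All estimates are uniform in $r<1$ through the supremum defining $\|A\|_{mol(B,L,m)}$, so taking the supremum yields $\|A\|_{\mathcal H^{\Phi^p}}\lesssim \Phi^p(M)\sigma(B)$.

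The main obstacle is the first step: $\Phi_m^{\,p}$ is not literally concave in general, and without some form of concavity there is no Jensen inequality to exploit. The passage to the equivalent concave function $\widetilde{\Phi}_m$, which requires $\Phi(t)/t^m$ to be non-increasing, is precisely what unlocks Jensen, and is the reason for the hypothesis \emph{for $m$ large enough} in the statement. Once the concavity is in place, the remaining arithmetic pins down $L>nm/p-2n$ as the sharp threshold for convergence of the annular series, matching the bound stated in the proposition.
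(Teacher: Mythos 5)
Your argument is correct and is essentially the paper's own proof: the paper likewise invokes Lemma~\ref{decreasing} to make $\Phi_m(t)=\Phi(t^{1/m})$ concave up to equivalence (hence $\Phi_m^p$ concave), applies Jensen in the form $\int_X\Phi^p(f)\,d\mu\le\Phi^p\bigl(\|f\|_{L^m(X,d\mu)}\bigr)$, and then defers the annular decomposition to Proposition~1.10 of \cite{BG}, which is exactly the dyadic-ring computation you carry out and which yields the same threshold $L>nm/p-2n$. The only difference is that you write out those annular estimates explicitly rather than citing them.
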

\begin{proof}
We first remark that by Lemma \ref{decreasing}, $\Phi_m=\Phi(t^{1/m})\in\mathscr{L}_{1/m} $. Consequently, we may suppose that $\Phi_m$ is concave, which implies that $\Phi_m^p$ is also concave. The proof then follows as in the proof of Proposition $1.10$ in \cite{BG}, using often the Jensen Inequality in the following way:
\begin{equation}\label{jensen}
\int_X\Phi^p(f)d\mu = \int_X\Phi_m^p(f^m)d\mu\leq \Phi_m^p\left( \int_X f^m d\mu\right)=\Phi^p\left(||f||_{L^m(X,d\mu)}\right).
\end{equation}
Here, $d\mu$ is a probability measure and $f$ is a positive function on the measure space $(X,d\mu).$
\end{proof}

\subsection{Boundedness of $h_b$: $\mathcal H^{\Phi_1}(\mathbb B^n)\mathcal\rightarrow H^{\Phi_2}(\mathbb B^n)$}
We are now ready to give our result for the boundedness of the Hankel operators from
$ \mathcal H^{\Phi_1}(\mathbb B^n)$ into $ \mathcal H^{\Phi_2}(\mathbb B^n)$ in the case where $\Phi_1$ and $\Phi_2$ are both convex.  As in the previous section, we will follow the techniques in \cite{BS}. The generalization of molecular decomposition described in the previous subsection will be used in the present situation.

 We first describe a factorization of each $m-$molecule as a product of functions in  Hardy-Orlicz spaces related to power of convex growth functions.

\begin{thm}\label{factorization2gene}
Let $\Phi_1$ and $\Phi_2\in \mathscr{U}^q$, $\rho_i(t)=\frac{1}{t\Phi_i^{-1}(1/t)}$, $0<p\le 1$, and $\Psi_2$
the complementary function of $\Phi_2.$ Let $\Phi$ be such that $\rho(t)=\rho_\Phi(t):=\frac{\rho_1}{\rho_2}.$
 Then for $m$ large enough, a $m-$molecule $A$ associated to the ball $B$ may be written as $fg$, where $f$ is
 an $m-$molecule and $g\in \mathcal H^{\Psi_2^p}(\mathbb B^n)$. Moreover, for $L, L^\prime$ given with $L^\prime + \frac{2mn}{p} \leq L,$ $f$ and $g$ may be chosen such that
\begin{equation}\label{estimateaa}
||g||_{\mathcal H^{\Psi_2^p}}\lesssim 1, \,\,\,\textrm{and}\,\,\, ||f||_{mol(B,L^\prime,m)}\lesssim \frac{||A||_{mol(B,L,m)}}{\rho_2(\sigma(B)^{1/p})},
\end{equation}
or such that
\begin{equation}\label{estimateab}
||g||_{\mathcal H^{\Psi_2^p}}\lesssim 1, \,\,\,\textrm{and}\,\,\,  ||f||^{lux}_{\mathcal H^{\Phi_1^p}}\lesssim ||A||_{mol(B,L,m)}\sigma(B)^{1/p}\rho(\sigma(B)^{1/p}).
\end{equation}
\end{thm}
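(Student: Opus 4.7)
The plan is to mirror the proof of Theorem \ref{factorization} with exponents rescaled by $1/p$ and with the ordinary $L^2$ molecule condition replaced by the $L^m$ one. Fix $B = B(z_0, r) \subset \mathbb{S}^n$ and set $a := (1-r)z_0 \in \mathbb{B}^n$, so that $1 - |a| = r$ and $\sigma(B) \simeq r^n$. Inspired by Lemma \ref{anorliczfunction2}, I would define
\[
g(z) := \Psi_2^{-1}\!\left(\frac{1}{(1-|a|)^{n/p}}\right)\left(\frac{1-|a|}{1-\langle z, a\rangle}\right)^{2n/p},
\]
which by that lemma satisfies $\|g\|^{lux}_{\mathcal{H}^{\Psi_2^p}} \lesssim 1$. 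The relation \eqref{relationcomplementaryandrho} gives $\Psi_2^{-1}(\sigma(B)^{-1/p}) \simeq \rho_2(\sigma(B)^{1/p})$, and combined with the standard estimate $|1-\langle z,a\rangle| \lesssim d(z, z_0) + r$ on $\overline{\mathbb{B}^n}$ one obtains the pointwise lower bound
\[
|g(z)| \gtrsim \rho_2(\sigma(B)^{1/p})\, \frac{r^{2n/p}}{(d(z, z_0) + r)^{2n/p}}.
\]

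Since $g$ does not vanish on $\mathbb{B}^n$, the quotient $f := A/g$ is holomorphic. The key step is to check the $m$-molecular bound \eqref{estimateaa}. Inserting the lower bound on $|g|$ into the definition of $\|f\|_{mol(B, L', m)}$ and writing $\tau := d(z_0, r_1\xi)/r$, one gets
\[
\|f\|_{mol(B, L', m)}^m \lesssim \frac{1}{\rho_2(\sigma(B)^{1/p})^m}\,\sup_{r_1 < 1}\int_{\mathbb{S}^n}\bigl(1 + \tau^{L'+n}\bigr)(1+\tau)^{2nm/p}\, |A(r_1\xi)|^m \frac{d\sigma(\xi)}{\sigma(B)}.
\]
The elementary polynomial estimate $(1 + \tau^{L'+n})(1+\tau)^{2nm/p} \lesssim 1 + \tau^{L+n}$, valid precisely under the hypothesis $L' + 2nm/p \leq L$, converts the right-hand side into a multiple of $\|A\|_{mol(B, L, m)}^m/\rho_2(\sigma(B)^{1/p})^m$, yielding \eqref{estimateaa}.

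For \eqref{estimateab}, I would apply the molecule-to-Hardy-Orlicz inequality \eqref{moleculeinHardy-orlicz3} to $f$ viewed as an $m$-molecule of order $L'$ (choosing $L' > nm/p - 2n$, which is compatible with $L' + 2nm/p \leq L$ by taking $L$ large). Since the inequality \eqref{moleculeinHardy-orlicz3} is homogeneous under scaling $f \mapsto f/\lambda$, a direct computation of the Luxemburg norm gives
\[
\|f\|^{lux}_{\mathcal{H}^{\Phi_1^p}} \lesssim \frac{\|f\|_{mol(B, L', m)}}{\Phi_1^{-1}(\sigma(B)^{-1/p})} = \|f\|_{mol(B, L', m)}\, \sigma(B)^{1/p}\, \rho_1(\sigma(B)^{1/p}).
\]
Combining with \eqref{estimateaa} and using $\rho = \rho_1/\rho_2$ yields \eqref{estimateab}.

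The main obstacle, as in the proof of Theorem \ref{factorization} in \cite{BS}, is the bookkeeping of the exponent conditions: one must simultaneously take $m$ large enough for the concavity arguments behind \eqref{moleculeinHardy-orlicz3} and Lemma \ref{decreasing} to apply, choose $L' > nm/p - 2n$ for the molecule-in-$\mathcal{H}^{\Phi_1^p}$ estimate to kick in, and finally require $L \geq L' + 2nm/p$ so that the quotient $A/g$ still registers as an $m$-molecule of the prescribed reduced order. All of these are compatible because $L$ in the hypothesis may be taken arbitrarily large, and no new idea beyond the convex analogue of the original construction is needed.
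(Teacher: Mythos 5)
Your proposal is correct and follows essentially the same route as the paper: the same choice of $g$ via Lemma \ref{anorliczfunction2}, the same pointwise lower bound through (\ref{relationcomplementaryandrho}), the quotient $f=A/g$, and the same use of (\ref{moleculeinHardy-orlicz3}) with homogeneity for (\ref{estimateab}). The only cosmetic difference is that you absorb the paper's split of the molecular integral over $\tilde B$ and its complement into the single polynomial inequality $(1+\tau^{L'+n})(1+\tau)^{2nm/p}\lesssim 1+\tau^{L+n}$, which is an equivalent bookkeeping of the exponent condition $L'+\frac{2mn}{p}\le L$.
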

\begin{proof}

Suppose $B:=B(z_0,r)\subset \mathbb S^n,$ with $z_0\in \mathbb S^n$ and $r<1$. Let $a=(1-r)z_0$  and take
$$g(z)=\Psi_2^{-1}\left(\frac{1}{(1-|a|)^{n/p}}\right)\left(\frac{1-|a|}{1-\langle z,a\rangle}\right)^{2n/p}.$$  Hence by Lemma \ref{anorliczfunction2}, we have $||g||_{\mathcal H^{\Psi^p_2}} \lesssim 1$, and using (\ref{relationcomplementaryandrho}) we also have that
\begin{equation}\label{estimateac}
|g(z)|\gtrsim \rho_2(\sigma(B)^{1/p})\frac{r^{\frac{2n}{p}}}{ (d(z,z_0)+r)^{\frac{2n}{p}}},\quad z\in \overline{\mathbb B^n}.
\end{equation}
We set $f=A/g$ and $\tilde B=B(z_0,2r)$ and proceed to prove the second part of (\ref{estimateaa}). We obtain
\begin{eqnarray*}
||f||^m_{mol(B,L^\prime,m)}&=& \int_{\mathbb S^n}\left(1+\frac{d(z_0,\xi)^{L^\prime +n}}{r^{L^\prime +n}}\right)|f(\xi)|^m\frac{d\sigma(\xi)}{\sigma(B)}\\
&\simeq& \int_{\tilde B}|f(\xi)|^m\frac{d\sigma(\xi)}{\sigma(B)} + \int_{\mathbb S^n\backslash\tilde B}\left(\frac{d(z_0,\xi)}{r}\right)^{L^\prime +n}|f(\xi)|^m\frac{d\sigma(\xi)}{\sigma(B)}\\
&=&I +II,
\end{eqnarray*}
where
$$I=\int_{\tilde B}|f(\xi)|^m\frac{d\sigma(\xi)}{\sigma(B)} \quad\textrm{and}\quad
II =\int_{\mathbb S^n\backslash\tilde B}\left(\frac{d(z_0,\xi)}{r}\right)^{L^\prime +n}|f(\xi)|^m\frac{d\sigma(\xi)}{\sigma(B)}$$
In $ \tilde B$, we have $|g(\xi)|\gtrsim \rho_2(\sigma(B)^{1/p})$  so that
\begin{eqnarray*}
I &\lesssim & \frac 1{\left(\rho_2(\sigma(B)^{1/p})\right)^m}\int_{\tilde B}|A(\xi)|^m\frac{d\sigma(\xi)}{\sigma(B)} \lesssim  \frac {||A||^m_{mol(B,L,m)}}{\left(\rho_2(\sigma(B)^{1/p})\right)^m}.
\end{eqnarray*}
In  $\mathbb S^n\backslash\tilde B$, we have $|g(\xi)|\gtrsim \rho_2(\sigma(B)^{1/p})\frac{r^{2n/p}}{d(z,z_0)^{2n/p}}$ so that
\begin{eqnarray*}
II &=& \int_{\mathbb S^n\backslash\tilde B}\left(\frac{d(z_0,\xi)}{r}\right)^{L^\prime +n}|f(\xi)|^m\frac{d\sigma(\xi)}{\sigma(B)}\\
&\lesssim & \frac 1{\left(\rho_2(\sigma(B)^{1/p})\right)^m}\int_{\mathbb S^n\backslash\tilde B}\left(\frac{d(z_0,\xi)}{r}\right)^{L^\prime +n +\frac{2nm}{p}}|A(\xi)|^m\frac{d\sigma(\xi)}{\sigma(B)}\\
&\lesssim & \frac {||A||^m_{mol(B,L,m)}}{\left(\rho_2(\sigma(B)^{1/p})\right)^m}.
\end{eqnarray*}
This proves (\ref{estimateaa}). It remains to prove (\ref{estimateab}).

By homogeneity, it is sufficient to prove that for $||A||_{mol(B,L,m)}\sigma(B)^{1/p}\rho(\sigma(B)^{1/p})=1$, the function that has been chosen is such that $\int_{\mathbb S^n}\Phi_1^p(|f|)d\sigma\lesssim 1.$ By  (\ref{moleculeinHardy-orlicz3}), it is enough to prove that $||f||_{mol(B,L^\prime,m)})\lesssim \Phi_1^{-1}\left(\frac 1{\sigma(B)^{1/p}}\right)=\frac 1{\sigma(B)^{1/p}\rho_1(\sigma(B)^{1/p})}.$ This holds by (\ref{estimateaa}) and the definition of $\rho$.
\end{proof}

Taking $\Phi_1=\Phi_2$, we obtain in particular the following.
\begin{cor}\label{factorization3gene}
Let $\Phi\in \mathscr{U}^q$, $0<p\le 1$, $\rho(t)=\frac{1}{t\Phi^{-1}(1/t)}$, and $\Psi$ the complementary function of $\Phi.$  Then for $m$ large enough, a $m-$molecule $A$ associated to the ball $B$ may be written as $fg$, where $f$ is  an $m-$molecule and $g\in \mathcal H^{\Psi^p}(\mathbb B^n)$. Moreover, for $L, L^\prime$ given with $L^\prime + \frac{2mn}{p} \leq L,$ $f$ and $g$ may be chosen such that
\begin{equation}\label{estimate23}
||g||_{\mathcal H^{\Psi^p}}\lesssim 1, \,\,\,\textrm{and}\,\,\, ||f||_{mol(B,L^\prime,m)}\lesssim \frac{||A||_{mol(B,L,m)}}{\rho(\sigma(B)^{1/p})},
\end{equation}
or such that
\begin{equation}\label{estimate33}
||g||_{\mathcal H^{\Psi^p}}\lesssim 1, \,\,\,\textrm{and}\,\,\,  ||f||^{lux}_{\mathcal H^{\Phi^p}}\lesssim ||A||_{mol(B,L,m)}\sigma(B)^{1/p}.
\end{equation}
\end{cor}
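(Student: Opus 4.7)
The corollary is the specialization of Theorem \ref{factorization2gene} to the case $\Phi_1=\Phi_2=\Phi$, so the plan is simply to invoke that theorem and check that the two estimates reduce to \eqref{estimate23} and \eqref{estimate33}.

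First, I apply Theorem \ref{factorization2gene} with $\Phi_1=\Phi_2=\Phi$ and complementary function $\Psi_2=\Psi$. Since $\rho_i(t)=1/(t\Phi_i^{-1}(1/t))$ depends only on $\Phi_i$, the equality $\Phi_1=\Phi_2=\Phi$ forces $\rho_1=\rho_2=\rho$, and hence
\[
\rho_\Phi(t) = \frac{\rho_1(t)}{\rho_2(t)} \equiv 1.
\]
Choose $m$ large enough (as permitted by the theorem) and choose $L,L'$ with $L'+2mn/p \le L$. The theorem then yields a factorization $A=fg$ where $f$ is an $m$-molecule, $g\in \mathcal H^{\Psi^p}(\mathbb B^n)$, and $\|g\|_{\mathcal H^{\Psi^p}}\lesssim 1$.

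For the first estimate, plugging $\rho_2=\rho$ directly into \eqref{estimateaa} gives
\[
\|f\|_{mol(B,L',m)} \lesssim \frac{\|A\|_{mol(B,L,m)}}{\rho(\sigma(B)^{1/p})},
\]
which is exactly \eqref{estimate23}. For the second estimate, note that the theorem's first growth function is $\Phi_1=\Phi$, so $\mathcal H^{\Phi_1^p}=\mathcal H^{\Phi^p}$, and \eqref{estimateab} reads
\[
\|f\|^{lux}_{\mathcal H^{\Phi^p}} \lesssim \|A\|_{mol(B,L,m)}\,\sigma(B)^{1/p}\,\rho_\Phi(\sigma(B)^{1/p}).
\]
Since $\rho_\Phi\equiv 1$, the last factor drops and we obtain \eqref{estimate33}.

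There is no real obstacle here: every ingredient has already been assembled in Theorem \ref{factorization2gene}, and the only verification needed is the trivial observation that taking $\Phi_1=\Phi_2$ collapses the ratio $\rho_1/\rho_2$ to the constant $1$. This accounts for the absence of a weight factor in \eqref{estimate33} compared to \eqref{estimateab}.
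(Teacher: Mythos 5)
Your proof is correct and is exactly the paper's argument: the corollary is stated immediately after Theorem \ref{factorization2gene} with the one-line justification ``Taking $\Phi_1=\Phi_2$, we obtain in particular the following,'' and your verification that $\rho_1=\rho_2$ makes $\rho_\Phi\equiv 1$, collapsing \eqref{estimateaa} and \eqref{estimateab} into \eqref{estimate23} and \eqref{estimate33}, is precisely the intended (and only needed) check.
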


This leads to the following generalization of the weak factorization of the Hardy spaces $\mathcal H^p(\mathbb B^n)$, $0<p\le 1$, obtained in \cite{CRW, GP}
\begin{thm}\label{weakfactoHp}
Let $0<p\le 1$, $\Phi\in \mathscr{U}^q$, $\rho(t)=\frac{1}{t\Phi^{-1}(1/t)}$, and $\Psi$ the complementary function of $\Phi.$ Then given any
$f\in \mathcal H^p(\mathbb B^n)$ there exist $f_j\in \mathcal H^{\Phi^p}(\mathbb B^n)$, $g_j\in \mathcal H^{\Psi^p}(\mathbb B^n)$, $j\in \mathbb N$
 such that $$f=\sum_{j=0}^\infty f_jg_j$$
 and
\begin{equation}\label{hpnormineq}
\sum_{j}||g_j||^{lux}_{\mathcal H^{\Psi^p}}||f_j||^{lux}_{\mathcal H^{\Phi^p}}\lesssim ||f||_{p}.
\end{equation}
\end{thm}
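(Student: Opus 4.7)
The plan is to combine the molecular decomposition of $\mathcal{H}^p(\mathbb{B}^n)$ with the pointwise factorization of individual molecules from Corollary \ref{factorization3gene}. First, I would note that $\Phi_0(t):=t^p\in\mathscr{L}_p$ for $0<p\le 1$, since $t\mapsto t^{p-1}$ is non-increasing; thus $\mathcal{H}^p(\mathbb{B}^n)=\mathcal{H}^{\Phi_0}(\mathbb{B}^n)$ falls under the scope of Theorem \ref{moleculardecomposition2}. For $m$ fixed large enough and $L$ chosen below, any $f\in\mathcal{H}^p(\mathbb{B}^n)$ therefore admits a decomposition $f=\sum_j A_j$, where each $A_j$ is an $m$-molecule of order $L$ associated to a ball $B_j\subset\mathbb{S}^n$, satisfying
\[
\sum_j \|A_j\|^p_{mol(B_j,L,m)}\sigma(B_j)\simeq\|f\|^p_p.
\]

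Next, I would apply Corollary \ref{factorization3gene} with the given $\Phi$, for $L, L'$ chosen so that $L'+\tfrac{2mn}{p}\le L$ and $L'$ is sufficiently large, to factor each $A_j$ as $A_j=f_jg_j$ with
\[
\|g_j\|^{lux}_{\mathcal{H}^{\Psi^p}}\lesssim 1,\qquad \|f_j\|^{lux}_{\mathcal{H}^{\Phi^p}}\lesssim\|A_j\|_{mol(B_j,L,m)}\sigma(B_j)^{1/p}.
\]
This produces the decomposition $f=\sum_j f_jg_j$. Setting $b_j:=\|A_j\|_{mol(B_j,L,m)}\sigma(B_j)^{1/p}$, we have $b_j^p=\|A_j\|^p_{mol(B_j,L,m)}\sigma(B_j)$ and $\sum_j b_j^p\simeq\|f\|^p_p$. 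The elementary $p$-subadditivity inequality $\sum_j b_j\le\bigl(\sum_j b_j^p\bigr)^{1/p}$, valid for $0<p\le 1$ and $b_j\ge 0$ (equivalently, the embedding $\ell^p\hookrightarrow\ell^1$ for positive sequences), then yields
\[
\sum_j\|g_j\|^{lux}_{\mathcal{H}^{\Psi^p}}\|f_j\|^{lux}_{\mathcal{H}^{\Phi^p}}\lesssim\sum_j b_j\le\Bigl(\sum_j b_j^p\Bigr)^{1/p}\simeq\|f\|_p,
\]
which is precisely \eqref{hpnormineq}.

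The only real obstacle is the consistent choice of the parameters $m$, $L$, $L'$. One needs $m$ large enough for Lemma \ref{decreasing} (and thus Corollary \ref{factorization3gene}) to apply, and $L$ large enough so that both $L>L_{p,m}=mn(1/p-1)$ holds (required by Theorem \ref{moleculardecomposition2}) and $L\ge L'+\tfrac{2mn}{p}$ with $L'>L_{p,m}$ (required by Corollary \ref{factorization3gene} and by the proposition that guarantees that the resulting $f_j$ lie in $\mathcal{H}^{\Phi^p}(\mathbb{B}^n)$ with the stated control). Fixing $m$ first and then taking $L$ arbitrarily large resolves this without difficulty, and the argument involves no limiting or compactness step.
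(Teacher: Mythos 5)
Your proposal is correct and follows essentially the same route as the paper: molecular decomposition of $\mathcal H^p(\mathbb B^n)$ via Theorem \ref{moleculardecomposition2} applied to $\Phi_0(t)=t^p$, factorization of each $m$-molecule via Corollary \ref{factorization3gene}, and the $\ell^p\hookrightarrow\ell^1$ embedding to pass from $\sum_j\|A_j\|_{mol(B_j,L,m)}\sigma(B_j)^{1/p}$ to $\bigl(\sum_j\|A_j\|^p_{mol(B_j,L,m)}\sigma(B_j)\bigr)^{1/p}\simeq\|f\|_p$. Your added care about the consistent choice of $m$, $L$, $L'$ is a point the paper leaves implicit, but it does not change the argument.
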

\begin{proof}
 Applying Theorem \ref{moleculardecomposition2} to $\Phi(t)=t^p$, we know that there exist $m-$molecules $A_j$ of order $L>L_{p,m}:=m n(1/p-1)$, associated to the balls $B_j$, so that $f$ may be written as
$$f=\sum_j A_j$$
with  $||f||_{\mathcal H^{p}}\simeq \sum_j ||A_j||^p_{mol(B_j,L,m)}\sigma(B_j).$

The weak factorization then follows from the factorization of each molecule
as obtained in Corollary \ref{factorization3gene} with
\begin{equation*}
||g_j||_{\mathcal H^{\Psi^p}}\lesssim 1, \,\,\,\textrm{and}\,\,\,  ||f_j||^{lux}_{\mathcal H^{\Phi^p}}\lesssim ||A_j||_{mol(B_j,L,m)}\sigma(B_j)^{1/p}.
\end{equation*}
The inequality (\ref{hpnormineq}) follows easily from the above observation:
\begin{eqnarray*}
\sum_{j}||g_j||^{lux}_{\mathcal H^{\Psi^p}}||f_j||^{lux}_{\mathcal H^{\Phi^p}} &\lesssim& \sum_{j}||f_j||^{lux}_{\mathcal H^{\Phi^p}}\\ &\lesssim&
\sum_{j}||A_j||_{mol(B_j,L,m)}\sigma(B_j)^{1/p}\\ &\lesssim& \left(\sum_{j}||A_j||_{mol(B_j,L,m)}^p\sigma(B_j)\right)^{1/p}\\ &\lesssim& ||f||_{p}.
\end{eqnarray*}
The proof is complete.
\end{proof}

Using Theorem \ref{factorization2gene}, we are ready to prove the following result about boundedness of the Hankel operator $h_b$ in the case where the growth functions are both convex.

\begin{thm}\label{maintheorem3}
Let $\Phi_1$ and $\Phi_2$ in $ \mathscr{U}^q$, and $\rho_i(t)=\frac{1}{t\Phi_i^{-1}(1/t)}$. \\
We suppose that:
\begin{itemize}
\item[(i)] $\Phi_2$ satisfies the Dini condition (\ref{dinicondition})
\item[(ii)]  $\frac{\Phi_1^{-1}(t)\Psi_2^{-1}(t)}{t}$ is non-decreasing or $\Phi_1=\Phi_2.$
\end{itemize}
 Then   the Hankel operator $h_b$ extends into a bounded operator from $\mathcal H^{\Phi_1}(\mathbb
 B^n)$ into $\mathcal H^{\Phi_2}(\mathbb
 B^n)$ if and only if its symbol $b$ belongs to  $ BMOA(\rho),$ where
\begin{equation*}
\rho=\rho_{\Phi}:=\frac{\rho_1}{\rho_2}.
\end{equation*}
\end{thm}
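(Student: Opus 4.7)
The plan is to parallel the proof of Theorem \ref{maintheorem2}, substituting the convex-case factorization of Theorem \ref{factorization2gene} for Theorem \ref{factorization}, and the generalized $m$-molecular decomposition of Theorem \ref{moleculardecomposition2} for its Hilbertian ancestor. First I would fix $\Phi$ by $\Phi^{-1}(t)=\Phi_1^{-1}(t)\Psi_2^{-1}(t)$, equivalently $\rho_\Phi=\rho_1/\rho_2$. Under hypothesis (ii) one is in one of two situations: either $\Phi_1=\Phi_2$, in which case (\ref{relationfunctionandcomplementary}) gives $\Phi(t)\simeq t$ so that $\mathcal H^\Phi=\mathcal H^1$ and $BMOA(\rho_\Phi)=BMOA$; or the monotonicity of $t\mapsto\Phi_1^{-1}(t)\Psi_2^{-1}(t)/t$ translates (via the substitution $u=\Phi_1(t)$ and (\ref{relationfunctionandcomplementary})) into the hypothesis of Lemma \ref{conditionforlowertype}, which places $\Phi$ in some class $\mathscr{L}_p$. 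In both cases Theorem \ref{duality1} identifies $(\mathcal H^\Phi(\mathbb B^n))^*$ with $BMOA(\rho_\Phi)$, reducing the theorem to an equivalence between boundedness of $h_b$ and membership of $b$ in the dual of $\mathcal H^\Phi$.

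For the sufficiency I would take $b\in BMOA(\rho_\Phi)$ and, given a bounded holomorphic $f$ and a test function $g\in\mathcal H^{\Psi_2}(\mathbb B^n)$, use the self-adjointness of the Szeg\"o projection to write
\[
\langle h_b f,g\rangle=\langle P(b\overline f),g\rangle=\langle b,fg\rangle .
\]
Proposition \ref{volbergtolokonnikov} places $fg$ in $\mathcal H^\Phi$ with $\|fg\|^{lux}_{\mathcal H^\Phi}\lesssim\|f\|^{lux}_{\mathcal H^{\Phi_1}}\|g\|^{lux}_{\mathcal H^{\Psi_2}}$, so the identification of the dual together with the duality $(\mathcal H^{\Phi_2})^*=\mathcal H^{\Psi_2}$ furnished by Theorem \ref{duality2} (applicable because $\Phi_2$ satisfies the Dini condition) yields $\|h_b f\|_{\mathcal H^{\Phi_2}}\lesssim\|b\|_{BMOA(\rho_\Phi)}\|f\|^{lux}_{\mathcal H^{\Phi_1}}$. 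A density argument extends this to all $f\in\mathcal H^{\Phi_1}$.

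For the necessity I would show that $h_b$ bounded forces $b$ to be a bounded functional on $\mathcal H^\Phi$ via the limiting integral pairing. Fix $h$ in a dense subspace of $\mathcal H^\Phi$ (say polynomials), and apply Theorem \ref{moleculardecomposition2} with $m$ large enough that the order $L$ of the molecules can be taken as large as needed; this gives $h=\sum_j A_j$ with $A_j$ an $m$-molecule associated to $B_j$ and $\sum_j\Phi(\|A_j\|_{mol(B_j,L,m)})\sigma(B_j)\simeq\|h\|_{\mathcal H^\Phi}$. Then factor each molecule via Theorem \ref{factorization2gene} taken at $p=1$ and with $L\ge L'+2mn$, producing $A_j=f_jg_j$ with $\|g_j\|^{lux}_{\mathcal H^{\Psi_2}}\lesssim 1$ and $\|f_j\|^{lux}_{\mathcal H^{\Phi_1}}\lesssim\|A_j\|_{mol}\,\sigma(B_j)\rho(\sigma(B_j))$. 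Therefore
\[
|\langle b,h\rangle|\le\sum_j|\langle h_b f_j,g_j\rangle|\lesssim\|h_b\|\sum_j\|f_j\|^{lux}_{\mathcal H^{\Phi_1}}\|g_j\|^{lux}_{\mathcal H^{\Psi_2}},
\]
and the identity $\sigma(B)\rho(\sigma(B))=1/\Phi^{-1}(1/\sigma(B))$ together with the normalization of the molecules in the decomposition reduces the right-hand side to $\|h_b\|\sum_j\Phi(\|A_j\|_{mol})\sigma(B_j)\simeq\|h_b\|\,\|h\|_{\mathcal H^\Phi}$, as required.

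The main obstacle I anticipate lies in the last step of the necessity part. In the concave case of Theorem \ref{maintheorem2} the estimate (\ref{moleculeinHardy-orlicz}) directly converted the bound on $f_j$ into one of the form $\Phi(\|A_j\|_{mol})\sigma(B_j)$, but in the present setting $\Phi_1$ is convex and (\ref{moleculeinHardy-orlicz3}) is stated only for $\mathcal H^{\Phi^p}$ with $p\le 1$. One must therefore rely on the normalization $\|A_j\|_{mol}\lesssim\Phi^{-1}(1/\sigma(B_j))$ implicit in Theorem \ref{moleculardecomposition2} and tune the free parameter $m$ so that the two order conditions---$L>L_{p,m}$ from the decomposition and $L\ge L'+2mn$ together with $L'>nm/p-2n$ from the factorization---can be met simultaneously. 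Carrying out this bookkeeping, and confirming that the two notions of molecule used in the two theorems are genuinely compatible, is the technical core of the argument.
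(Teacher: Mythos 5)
Your proposal is sound, and its sufficiency half coincides exactly with the paper's (duality via Theorems \ref{duality1} and \ref{duality2} combined with Proposition \ref{volbergtolokonnikov}); the preliminary reduction of hypothesis (ii) to $\Phi\in\mathscr{L}_p$ is also the paper's first step, though the paper obtains it directly from Proposition \ref{phiandinverse} rather than by translating (ii) into the hypothesis of Lemma \ref{conditionforlowertype}. Where you genuinely diverge is the necessity. You transplant the strategy of Theorem \ref{maintheorem2}: decompose an arbitrary $h\in\mathcal H^{\Phi}(\mathbb B^n)$ into $m$-molecules by Theorem \ref{moleculardecomposition2}, factor each molecule by Theorem \ref{factorization2gene} with $p=1$, and sum to show that $b$ acts as a bounded functional on $\mathcal H^{\Phi}(\mathbb B^n)$. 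The paper instead verifies the $BMO(\rho,m)$ oscillation condition ball by ball: for a fixed ball $B$ it sets $a_l=\chi_B(l-R(l))$ for $l\in L^m$, observes that $P(a_l)$ is a single $m$-molecule, factors only that molecule, and recovers $\left(\int_B|b-R|^{m'}d\sigma\right)^{1/m'}\lesssim\|h_b\|\,\sigma(B)^{1/m'}\rho(\sigma(B))$ by $L^m$--$L^{m'}$ duality; this requires the auxiliary polynomial estimates (\ref{estimate5})--(\ref{estimate10}) and Viviani's identification $BMOA(\rho)=BMOA(\rho,m)$, but no summation over a decomposition and no convergence issues. Your route buys a genuine weak factorization of $\mathcal H^{\Phi}(\mathbb B^n)$ into $\mathcal H^{\Phi_1}\cdot\mathcal H^{\Psi_2}$ as a byproduct, but it leans on the normalization $\|A_j\|_{mol(B_j,L,m)}\lesssim\Phi^{-1}(1/\sigma(B_j))$, which is not recorded in the statement of Theorem \ref{moleculardecomposition2} (only the norm equivalence is) and must be imported from the construction in \cite{BG}, together with the convergence of $\sum_j\langle h_bf_j,g_j\rangle$ and the passage from the modular $\|h\|_{\mathcal H^\Phi}$ to the Luxembourg norm; you correctly flag the first of these, and the order bookkeeping you describe does close since $L$ may be taken arbitrarily large. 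The paper's local argument avoids all of this at the cost of the extra finite-dimensional lemma on $\mathcal P_N(B)$.
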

\begin{proof}
We first remark that using Propostion \ref{phiandinverse}, we have that $(ii)$  implies that $\Phi\in \mathscr{L}_p$ for some $p$ so that $(\mathcal H^{\Phi}(\mathbb B^n))^*= BMOA(\rho_\Phi)$, and we have the factorization of $m-$molecules. The sufficient part follows from   Theorem \ref{duality2}, Theorem \ref{duality1} and Proposition \ref{volbergtolokonnikov}. Indeed,
\begin{eqnarray*}
||h_b(f)||^{lux}_{\mathcal H^{\Phi_2}(\mathbb B^n)}&=& \sup_{||g||^{lux}_{\mathcal H^{\Psi_2}(\mathbb B^n)}=1}|\langle h_b(f),g\rangle| = \sup_{||g||^{lux}_{\mathcal H^{\Psi_2}(\mathbb B^n)}=1}|\langle b,fg\rangle|\\
&\lesssim& \sup_{||g||^{lux}_{\mathcal H^{\Psi_2}(\mathbb B^n)}=1}\left(||b||_{BMOA(\rho_\Phi)} ||fg||^{lux}_{\mathcal H^{\Phi}(\mathbb B^n)}\right)\\
&\lesssim& \sup_{||g||^{lux}_{\mathcal H^{\Psi_2}(\mathbb B^n)}=1}\left(||b||_{BMOA(\rho_\Phi)} ||f||^{lux}_{\mathcal H^{\Phi_1}(\mathbb B^n)}||g||^{lux}_{\mathcal H^{\Psi_2}(\mathbb B^n)}\right)\\
&\lesssim&||b||_{BMOA(\rho_\Phi)} ||f||^{lux}_{\mathcal H^{\Phi_1}(\mathbb B^n)}.
\end{eqnarray*}
Now, we assume that $h_b$ is bounded from $\mathcal H^{\Phi_1}(\mathbb
 B^n)$ into $\mathcal H^{\Phi_2}(\mathbb
 B^n)$ and prove that $b$ belongs to $BMOA(\rho)$. Since $BMOA(\rho)=BMOA(\rho, m)$ for any $m\geq 1$, it is enough to prove that there exist  constants $C$ and $m^\prime$ such that, for each ball $B$, we can find a polynomial $R\in \mathcal P_N(B)$ such that

\begin{equation}\label{BMOAinequality}
\left(\int_B|b-R|^{m^\prime}d\sigma\right)^{1/m^\prime}\leq C\left(\sigma (B)\right)^{1/m^\prime}\rho(\sigma(B)).
\end{equation}
Let $B=B(z_0,r)$ be a ball in $\mathbb S^n$, we take for $R$ the orthogonal projection of $b$ onto $\mathcal P_N(B),$ and let $a:=\chi_B(b-R)$ so that $a$ is an $m^\prime-$ atom.

 For $l\in L^m(\mathbb S^n, d\sigma)$ we will denote by $R(l)$ the orthogonal projection of $l$ onto $\mathcal P_N(B),$ so that the function $a_l=\chi_B(l-R(l))$ is an $m-$atom (see Definition \ref{matom}) associated to $B$.  We claim that there exists an absolute constant $C=C(N,m,n)$ such that

\begin{equation}\label{estimate5}
||\chi_B(l-R(l))||_m\le C ||\chi_Bl||_m.
\end{equation}

Assume that (\ref{estimate5}) holds.  From Proposition \ref{projectionofmatom}, one knows that $A_l:=P(a_l)$ is a $m-$molecule
associated to $\tilde B$, with $||A_l||_{mol( B,L,m)}\lesssim ||a_l||_m\sigma(B)^{-1/m}$. From Theorem \ref{factorization2gene} we know that $A_l$ may be written as $f_lg_l$, with
\begin{equation*}
||g_l||_{\mathcal H^{\Psi_2}}\lesssim 1, \,\,\,\textrm{and}\,\,\,  ||f_l||^{lux}_{\mathcal H^{\Phi_1}}\lesssim ||A_l||_{mol(B,L,m)}\sigma(B)\rho(\sigma(B)).
\end{equation*}
From this, we obtain using (\ref{estimate5}), that
\begin{eqnarray*}
||a||_{m^\prime}&=& \sup_{||\chi_Bl||_m=1}|\langle a,l\rangle| =
\sup_{||\chi_Bl||_m=1}|\langle a,l-R(l)\rangle|\\
&=& \sup_{||\chi_Bl||_m=1}|\langle b,\chi_B(l-R(l))\rangle|= \sup_{||\chi_Bl||_m=1}|\langle b,a_l\rangle|\\
&=&\sup_{||\chi_Bl||_m=1}|\langle b,P(a_l)\rangle|=\sup_{||\chi_Bl||_m=1}|\langle h_b(f_l),g_l\rangle|\\
&\lesssim&\sup_{||\chi_Bl||_m=1}\left(||h_b||||f_l||^{lux}_{\mathcal H^{\Phi_1}}||g_l||^{lux}_{\mathcal H^{\Psi_2}}\right)\\
&\lesssim&  \sup_{||\chi_Bl||_m=1}\left(||h_b||||a_l||_m\sigma(B)^{1/m^\prime}\rho(\sigma(B))\right)\\
&\lesssim& ||h_b||\left(\sigma (B)\right)^{1/m^\prime}\rho(\sigma(B)).
\end{eqnarray*}

It remains to prove (\ref{estimate5}). It is clear that what we have to prove is that
\begin{equation}\label{estimate7}
||R(l)||_m\le C ||\chi_Bl||_m.
\end{equation}

Without loss of generality we can assume that $z_0=(1,0,\cdots , 0)$, so that the coordinates related to $z_0$ may be taken as the ordinary ones. Otherwise we use the action of the unitary group. In the local coordinates, the ball $B$ becomes $Q(r)=\left\lbrace  z=(t,x)\in \mathbb R^{2n-1}=\mathbb R\times \mathbb R^{2n-2}: \;|t|+|x|^2< r\right\rbrace,$ and the measure $\sigma$, the Lebesgue measure in  $\mathbb R^{2n-1}.$   In these coordinates, $ \mathcal P_N(B)$ is the space of polynomials of degree at most $N$ with support in $Q(r)$. This is a closed subspace of the Hilbert space $L^2(Q(r),dz=dtdx)$ with finite dimension $M$. 
So if $\{P_j\}_{\{1\le j\le M\}}$ is an orthonormal basis of   $ \mathcal P_N(B)$, we have
$$R(l)=\sum_{j=1}^M \langle l,P_j\rangle P_j.$$
It follows that to prove (\ref{estimate7}), it is enough to prove that for some absolute constant $C$ (independent of $P$ and $r$),
\begin{equation}\label{estimate9}
||P||_m||P||_{m^\prime}\le C||P||^2_2 ,
\end{equation}
for any polynomial $P \in  \mathcal P_N(B) $.

The inequality (\ref{estimate9})  follows from the fact that there exist constants  $A$ and $B$ depending only on $N$ and $n$ such that, for any polynomial $P=\sum_{|\alpha|\le N }c_\alpha z^\alpha$,

\begin{equation}\label{estimate10}
A\int_{Q(1)}|P(z)|dz \leq \sum_{|\alpha|\leq N}|c_\alpha|\leq B \int_{Q(1)}|P(z)|dz.
\end{equation}

Indeed, (\ref{estimate10}) clearly shows that for any $m\ge 1,$ $$\left(\int_{Q(1)}|P(z)|^mdz\right)^{1/m}\simeq \int_{Q(1)}|P(z)|dz$$
 and the desired result then follows from the fact that $\mathcal P_N(B) $ is stable under dilations and translations.  This ends the proof of the theorem.
\end{proof}

We can observe that in the proof of the above theorem, the condition $(ii)$ is used to ensure that the resulting growth function $\Phi$ is in some $\mathcal L_p$. Hence using Lemma \ref{conditionforlowertype}, we have the following proposition.

\begin{prop}\label{maintheorem4}
Let $\Phi_1$ and $\Phi_2$ in $ \mathscr{U}^q$, and $\rho_i(t)=\frac{1}{t\Phi_i^{-1}(1/t)}$. \\
We suppose that
\begin{itemize}
\item[(i)] $\Phi_2$ satisfies the Dini condition (\ref{dinicondition})
\item[(ii)]  $\frac{\Phi_2^{-1}\circ\Phi_1(t)}{t}\quad\textrm{is non-increasing}$.
\end{itemize}
  Then   the Hankel operator $h_b$ extends into a bounded operator from $\mathcal H^{\Phi_1}(\mathbb
 B^n)$ into $\mathcal H^{\Phi_2}(\mathbb
 B^n)$ if and only if its symbol $b$ belongs to  $ BMOA(\rho_\Phi)= (\mathcal H^{\Phi}(\mathbb
 B^n))^*,$ where
\begin{equation*}
\rho_{\Phi}:=\frac{\rho_1}{\rho_2}.
\end{equation*}
\end{prop}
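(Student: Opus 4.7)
The strategy is to derive Proposition \ref{maintheorem4} as a direct corollary of Theorem \ref{maintheorem3}, by showing that hypothesis (ii) of Proposition \ref{maintheorem4} implies hypothesis (ii) of Theorem \ref{maintheorem3}. Concretely, define $\Phi$ by $\Phi^{-1}(t) = \Phi_1^{-1}(t)\Psi_2^{-1}(t)$; by the lemma relating $\rho_\Phi$ to the inverse product, this is equivalent to $\rho_\Phi = \rho_1/\rho_2$, so the two formulations of $\Phi$ agree.

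First, I would invoke Lemma \ref{conditionforlowertype}: under the assumptions that $\Phi_1,\Phi_2 \in \mathscr{U}^q$, that $\Phi_2$ satisfies the Dini condition (\ref{dinicondition}), and that $\Phi_2^{-1}\circ\Phi_1(t)/t$ is non-increasing, the lemma gives $\Phi \in \mathscr{L}_p$ for some $p>0$. Next, by Proposition \ref{phiandinverse}, membership $\Phi \in \mathscr{L}_p$ is equivalent to $\Phi^{-1} \in \mathscr{U}^{1/p}$, and in particular $t \mapsto \Phi^{-1}(t)/t = \Phi_1^{-1}(t)\Psi_2^{-1}(t)/t$ is non-decreasing. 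Therefore hypothesis (ii) of Theorem \ref{maintheorem3} is satisfied.

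Applying Theorem \ref{maintheorem3} then yields that $h_b$ is bounded from $\mathcal{H}^{\Phi_1}(\mathbb{B}^n)$ into $\mathcal{H}^{\Phi_2}(\mathbb{B}^n)$ if and only if $b \in BMOA(\rho_\Phi)$. Finally, because we have just shown $\Phi \in \mathscr{L}_p$, Theorem \ref{duality1} identifies $BMOA(\rho_\Phi)$ with the topological dual $(\mathcal{H}^\Phi(\mathbb{B}^n))^*$, giving the full statement of the proposition.

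There is essentially no obstacle beyond verifying that the auxiliary conditions are correctly chained: the only technical point is the application of Lemma \ref{conditionforlowertype}, which is already established earlier in the paper. The whole argument is a short piece of bookkeeping that packages Theorem \ref{maintheorem3} under a more concretely checkable hypothesis on the pair $(\Phi_1,\Phi_2)$, namely monotonicity of $\Phi_2^{-1}\circ\Phi_1(t)/t$, rather than on the derived function $\Phi$.
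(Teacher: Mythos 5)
Your proposal is correct and matches the paper's own argument: the paper likewise deduces this proposition from Theorem \ref{maintheorem3} by observing that hypothesis (ii) there is used only to guarantee $\Phi\in\mathscr{L}_p$, which Lemma \ref{conditionforlowertype} supplies under the new hypothesis. Your additional remark that the new condition in fact implies the old one (since $\Phi\in\mathscr{L}_p$ forces $t\mapsto\Phi^{-1}(t)/t=\Phi_1^{-1}(t)\Psi_2^{-1}(t)/t$ to be non-decreasing, up to the equivalence of growth functions the paper works with throughout) is a harmless repackaging of the same reduction.
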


\section{Boundedness of $h_b$: $\mathcal H^{\Phi_1}(\mathbb
 B^n)\rightarrow \mathcal H^{\Phi_2}(\mathbb
 B^n)$;  $(\Phi_1,\Phi_2)\in \mathscr{U}^q\times \mathscr{L}_p $}

Let us begin this section by recalling the definition of the admissible maximal function $\mathcal {M}(f)$ of a holomorphic function $f$.  For $\xi\in \mathbb S^n$,
$$\mathcal {M}(f)(\xi)=\sup\{|f(z)|:z\in \mathbb B^n, |1-\langle \xi,z\rangle|<1-|z|^2\}.$$
 We recall that $\mathcal {H}_{weak}^1(\mathbb B^n)$ consists of functions $f\in \mathcal {H}(\mathbb B^n)$ such that,
$$\lambda \sigma\left(\{\xi\in \mathbb S^n:\mathcal {M}(f)(\xi)>\lambda\}\right)\le C\,\,\,\textrm{for any}\,\,\, \lambda>0.$$

The following result is well known.
\begin{prop}\label{h1faibleorlicz}
Let $\Phi\in \mathscr{L}_p$. Suppose that $\Phi$ satisfies the Dini's condition
\begin{equation}\label{Dinicondition1}
\int_1^\infty\frac{\Phi(t)}{t^2}dt\le C<\infty.
\end{equation}
Then $\mathcal {H}_{weak}^1(\mathbb B^n)$ embeds continuously in $\mathcal H^{\Phi}(\mathbb
 B^n)$
\end{prop}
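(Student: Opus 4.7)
The plan is to run the standard layer-cake argument: estimate the Orlicz integral of the admissible maximal function $\mathcal M(f)$ by splitting the distribution function integral and using the weak-type bound on one half and the Dini condition on the other. To start, for every $\xi\in\mathbb S^n$ and $r\in(0,1)$, the point $z=r\xi$ satisfies $|1-\langle \xi,z\rangle|=1-r\le 1-r^2=1-|z|^2$, so $r\xi$ lies in the admissible approach region at $\xi$ and hence $|f(r\xi)|\le \mathcal M(f)(\xi)$. This reduces the claim to controlling $\int_{\mathbb S^n}\Phi(\mathcal M(f))\,d\sigma$. By homogeneity of both the weak-$\mathcal H^1$ gauge and the Luxembourg quasi-norm, I normalize $\|f\|_{\mathcal H^1_{weak}}=1$, so that simultaneously $\sigma(\{\mathcal M(f)>\lambda\})\le 1/\lambda$ and $\sigma(\{\mathcal M(f)>\lambda\})\le 1$.

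Next, I invoke the paragraph immediately following the definition of $\mathscr L_p$ to replace $\Phi$ by the equivalent $\mathcal C^1$ growth function $\int_0^t\Phi(s)/s\,ds$, for which $\Phi'(\lambda)\simeq \Phi(\lambda)/\lambda$. The layer-cake formula then gives
$$\int_{\mathbb S^n}\Phi(\mathcal M(f))\,d\sigma = \int_0^\infty \Phi'(\lambda)\,\sigma(\{\mathcal M(f)>\lambda\})\,d\lambda.$$
Splitting at $\lambda=1$ and using the trivial bound on $[0,1]$ and the weak-$\mathcal H^1$ bound on $[1,\infty)$ gives
$$\int_{\mathbb S^n}\Phi(\mathcal M(f))\,d\sigma \le \int_0^1\Phi'(\lambda)\,d\lambda + \int_1^\infty \frac{\Phi'(\lambda)}{\lambda}\,d\lambda \lesssim \Phi(1) + \int_1^\infty \frac{\Phi(\lambda)}{\lambda^2}\,d\lambda,$$
and the right-hand side is dominated by an absolute constant $K$ thanks precisely to the Dini condition (\ref{Dinicondition1}).

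Finally, I translate the integral bound into a Luxembourg bound using the lower type $p$ of $\Phi$: for every $\mu\ge 1$ one has $\Phi(t/\mu)\le C\mu^{-p}\Phi(t)$, whence
$$\int_{\mathbb S^n}\Phi\!\left(\frac{\mathcal M(f)}{\mu}\right)d\sigma \le C\mu^{-p}K.$$
Choosing $\mu=(CK)^{1/p}$ makes the left-hand side $\le 1$, so $\|f\|^{lux}_{\mathcal H^\Phi}\le (CK)^{1/p}$ under the normalization $\|f\|_{\mathcal H^1_{weak}}=1$; undoing the scaling yields $\|f\|^{lux}_{\mathcal H^\Phi}\lesssim \|f\|_{\mathcal H^1_{weak}}$, which is the desired continuous embedding. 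The only point requiring slight care is this last rescaling back to the Luxembourg quasi-norm (since lower type $p$ with $p<1$ does not scale integrals linearly), but it is entirely routine once the fixed-normalization estimate is in hand; the substantive analytic ingredient is the interplay between the weak $\mathcal H^1$ bound and the Dini condition in the layer-cake split.
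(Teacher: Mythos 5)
Your proof is correct and follows essentially the same route as the paper: the layer-cake formula for $\int_{\mathbb S^n}\Phi(\mathcal M(f))\,d\sigma$, split at $\lambda=1$, with the trivial measure bound on $[0,1]$ and the weak-type bound plus the Dini condition on $[1,\infty)$. The only additions are the explicit reduction $|f(r\xi)|\le \mathcal M(f)(\xi)$ and the conversion of the modular bound into a Luxembourg-norm bound via the lower type, steps the paper leaves implicit.
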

\begin{proof}
It is enough to prove that for any $f\in \mathcal {H}_{weak}^1(\mathbb B^n)$,  $$\int_{\mathbb S^n}\Phi(\mathcal {M}(f)(\xi))d\sigma(\xi)\le C.$$
We have
\begin{eqnarray*}
\int_{\mathbb S^n}\Phi(\mathcal {M}(f)(\xi))d\sigma(\xi) &=& \int_0^\infty \sigma\left(\{\xi\in \mathbb S^n:\mathcal {M}(f)(\xi)>\lambda\}\right)\Phi'(\lambda)d\lambda\\
&=& I+J,
\end{eqnarray*}
where $$I=\int_0^1 \sigma\left(\{\xi\in \mathbb S^n:\mathcal {M}(f)(\xi)>\lambda\}\right)\Phi'(\lambda)d\lambda$$
and $$J=\int_1^\infty \sigma\left(\{\xi\in \mathbb S^n:\mathcal {M}(f)(\xi)>\lambda\}\right)\Phi'(\lambda)d\lambda.$$
Clearly,

$$I\le \sigma(\mathbb S^n)\int_0^1\Phi'(\lambda)d\lambda=C.$$
To estimate the integral $J$, we use the definition of $\mathcal {H}_{weak}^1(\mathbb B^n)$, the fact that $\Phi'(t)\backsimeq \frac{\Phi(t)}{t}$ and that $\Phi$ satisfies the Dini's condition (\ref{Dinicondition1}) to obtain
\begin{eqnarray*}
J &=& \int_1^\infty |\{\xi\in \mathbb S^n:\mathcal {M}(f)(\xi)>\lambda\}|\Phi'(\lambda)d\lambda\\ &\le&
C\int_1^\infty \frac{\Phi'(\lambda)}{\lambda}d\lambda\\ &\backsimeq& C\int_1^\infty \frac{\Phi(\lambda)}{\lambda^2}d\lambda\le C<\infty.
\end{eqnarray*}
The proof is complete.
\end{proof}

We next prove a result which generalizes the case $h_b:\mathcal H^p(\mathbb B^n)\rightarrow \mathcal H^q(\mathbb B^n)$ with $1\le p<\infty$ and $0<q<1$.
\begin{thm}\label{casavecpertesimple}
Let $\Phi_1\in \mathscr{U}^q$ and $\Phi_2\in \mathscr{L}_p$. Let $\Psi_1$ be the complementary function of $\Phi_1$ and, suppose that $\Phi_1$ satisfies the Dini's condition (\ref{dinicondition}) while $\Phi_2$
satisfies (\ref{Dinicondition1}). Then $h_b$ extends as a bounded operator from $\mathcal H^{\Phi_1}(\mathbb
 B^n)$ to $\mathcal H^{\Phi_2}(\mathbb B^n)$ if and only if $b\in \mathcal H^{\Psi_1}(\mathbb
 B^n)$.
\end{thm}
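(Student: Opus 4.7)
The plan is to prove the two directions separately. Sufficiency follows the chain
\[
\mathcal H^{\Phi_1}(\mathbb B^n)\xrightarrow{f\mapsto b\bar f} L^1(\mathbb S^n)\xrightarrow{P} \mathcal H^1_{weak}(\mathbb B^n)\hookrightarrow \mathcal H^{\Phi_2}(\mathbb B^n),
\]
while necessity uses the duality $(\mathcal H^{\Phi_1}(\mathbb B^n))^{*}=\mathcal H^{\Psi_1}(\mathbb B^n)$ together with the trivial embedding $\mathcal H^{\Phi_2}(\mathbb B^n)\hookrightarrow \mathcal H^{p}(\mathbb B^n)$ produced by the lower type of $\Phi_2$.

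For sufficiency, the key observation is that $\Psi_1$ being complementary to $\Phi_1$ forces $\Phi_1^{-1}(t)\Psi_1^{-1}(t)\simeq t$. Proposition \ref{volbergtolokonnikov}, applied to the pair $(\Phi_1,\Psi_1)$, then gives for $b\in\mathcal H^{\Psi_1}(\mathbb B^n)$ and $f\in\mathcal H^{\Phi_1}(\mathbb B^n)$ the pointwise product $b\bar f\in L^1(\mathbb S^n)$ with $||b\bar f||_1\lesssim ||b||^{lux}_{\mathcal H^{\Psi_1}}||f||^{lux}_{\mathcal H^{\Phi_1}}$. The classical weak $(1,1)$ bound for the Szeg\"o projection (standard Calder\'on--Zygmund theory adapted to the Koranyi structure on $\mathbb S^n$) yields $h_b(f)=P(b\bar f)\in\mathcal H^1_{weak}(\mathbb B^n)$ with control on the admissible maximal function. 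Proposition \ref{h1faibleorlicz}, applicable because $\Phi_2$ satisfies the Dini condition (\ref{Dinicondition1}), embeds $\mathcal H^1_{weak}$ continuously into $\mathcal H^{\Phi_2}$, concluding this direction with $||h_b||\lesssim ||b||^{lux}_{\mathcal H^{\Psi_1}}$.

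For necessity, the $\Delta_2$-condition inherited from $\Phi_1\in\mathscr U^q$ makes $\mathcal H^\infty$ dense in $\mathcal H^{\Phi_1}$, so by Theorem \ref{duality2} it suffices to bound the functional $L_b:f\mapsto \int_{\mathbb S^n}f\bar b\,d\sigma$ on $\mathcal H^\infty$ by the $\mathcal H^{\Phi_1}$-norm; comparing the extended functional with its dual representative on holomorphic polynomials will then identify the representative as $b$ itself. Since $S(0,\xi)=1$, a direct calculation gives $L_b(f)=\overline{h_b(f)(0)}$. The hypothesis $\Phi_2\in\mathscr L_p$ supplies $\Phi_2(t)\gtrsim t^p$ for $t\geq 1$, hence a continuous embedding $\mathcal H^{\Phi_2}\hookrightarrow \mathcal H^p$. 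Combined with the subharmonic estimate $|g(0)|\leq ||g||_p$ for holomorphic $g$ and the assumed boundedness of $h_b$, this yields
\[
|L_b(f)|=|h_b(f)(0)|\leq ||h_b(f)||_p\lesssim ||h_b(f)||^{lux}_{\mathcal H^{\Phi_2}}\lesssim ||h_b||\cdot ||f||^{lux}_{\mathcal H^{\Phi_1}},
\]
so that $b\in\mathcal H^{\Psi_1}$ with $||b||^{lux}_{\mathcal H^{\Psi_1}}\lesssim ||h_b||$.

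The only step not already in the excerpt is the weak $(1,1)$ bound for the Szeg\"o projection, which is a standard Hardy-space fact on the ball; everything else is a recombination of Propositions \ref{volbergtolokonnikov} and \ref{h1faibleorlicz}, Theorem \ref{duality2}, and the lower-type embedding. The striking feature worth highlighting is that the symbol class $\mathcal H^{\Psi_1}$ depends only on $\Phi_1$: the target growth function $\Phi_2$ enters only qualitatively, through the two embeddings $\mathcal H^1_{weak}\hookrightarrow\mathcal H^{\Phi_2}$ (sufficiency, via (\ref{Dinicondition1})) and $\mathcal H^{\Phi_2}\hookrightarrow\mathcal H^p$ (necessity, via lower type).
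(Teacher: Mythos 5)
Your proposal is correct and follows essentially the same route as the paper: sufficiency via Proposition \ref{volbergtolokonnikov} to get $b\bar f\in L^1$, the weak $(1,1)$ boundedness of the Szeg\"o projection, and Proposition \ref{h1faibleorlicz}; necessity via evaluation at $0$, the embedding $\mathcal H^{\Phi_2}\hookrightarrow\mathcal H^{p}$, and the duality $(\mathcal H^{\Phi_1})^{*}=\mathcal H^{\Psi_1}$. The only difference is that you spell out a few steps the paper leaves implicit.
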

\begin{proof}
Let us begin by proving the necessity. Suppose that $h_b$ is bounded from $\mathcal H^{\Phi_1}(\mathbb
 B^n)$ to $\mathcal H^{\Phi_2}(\mathbb B^n)$. Then for any $f\in \mathcal H^{\Phi_1}(\mathbb
 B^n)$, we have $$\left|\int_{\mathbb S^n}b(\xi)\overline {f(\xi)}d\sigma(\xi)\right|=|h_bf(0)|\le C||h_b(f)||_{\mathcal H^{\Phi_2}}\le C||h_b||||f||_{\mathcal H^{\Phi_1}}.$$
  We have used the fact that $\mathcal H^{\Phi_2}(\mathbb B^n)$ is continuously contained in $\mathcal H^p(\mathbb B^n)$ (for some $p>0$), and the evaluation at $0$ is bounded on this space. It follows that $b$ belongs to the dual space of $\mathcal H^{\Phi_1}(\mathbb B^n)$
 that is $b\in \mathcal H^{\Psi_1}(\mathbb B^n)$.

 Conversely, if $b\in \mathcal H^{\Psi_1}(\mathbb B^n)$, then for any $f\in \mathcal H^{\Phi_1}(\mathbb
 B^n)$, the product $b\overline {f}$ is in $L^1(\mathbb S^n)$ by Proposition \ref{volbergtolokonnikov}. Thus, $h_b(f):=P(b\overline {f})$ is in $\mathcal {H}_{weak}^1(\mathbb B^n)$ and consequently in
 $\mathcal H^{\Phi_2}(\mathbb B^n)$ by Proposition \ref{h1faibleorlicz}. The proof is complete.
\end{proof}

\emph{Acknowledgements}: This work was done while the first author was a post-doctoral fellow at Trinity College of Dublin, a position funded by the "Irish Research Council for Science, Engineering and Technology".

\vskip 1cm

\bibliographystyle{plain}

\end{document}